\numberwithin{equation}{section}
\theoremstyle{plain}
\newtheorem{theorem}{Theorem}[section]
\newtheorem{lemma}[theorem]{Lemma}
\theoremstyle{definition}
\newtheorem{remark}[theorem]{Remark}
\newcommand{\F}{\mathcal{F}}
\newcommand{\N}{\mathbb{N}}
\newcommand{\R}{\mathbb{R}}
\newcommand{\G}{\mathcal{G}}
\newcommand{\ffi}{\varphi}
\newcommand{\supp}{\mathrm{supp}\;}
\newcommand{\ud}{\,\mathrm{d}}
\def\H{\mathscr{H}}
\def\J{\mathcal{J}}
\newcommand{\weakly}{\rightharpoonup}
\newcommand{\weakstar}{\overset{\ast}{\rightharpoonup}}
\newcommand{\cc}{\mathrm{c}}
\newcommand{\CC}{\mathrm{C}}
\newcommand{\B}{\mathcal D}
\newcommand{\h}{\mathscr{H}}
\newcommand{\Aa}{\alpha}
\definecolor{ddorange}{rgb}{1,0.5,0}
\definecolor{ddcyan}{rgb}{0,0.2,1.0}
\title[Parabolic Riesz flows] {Parabolic $\alpha$-Riesz flows and  limit cases $\alpha\to 0^+$, $\alpha\to d^-$}
\author[L. De Luca]
{L. De Luca}
\address[Lucia De Luca]{Istituto per le Applicazioni del Calcolo ``M. Picone'' IAC-CNR, Via dei Taurini 19, I-00185 Roma, Italy}
\email[L. De Luca]{lucia.deluca@cnr.it}
\author[M. Morini]
{M. Morini}
\address[Massimiliano Morini]{Dipartimento di Scienze Matematiche, Fisiche e Informatiche, Universit\`a degli Studi di Parma, Parma, Italy}
\email[M. Morini]{massimiliano.morini@unipr.it}
\author[M. Ponsiglione]
{M. Ponsiglione}
\address[Marcello Ponsiglione]{Dipartimento di Matematica ``Guido Castelnuovo'', Sapienza Universit\`a di Roma, Piazzale Aldo Moro 2, I-00185 Roma, Italy
}
\email[M. Ponsiglione]{ponsigli@mat.uniroma1.it}
\author[E. Spadaro]
{E. Spadaro}
\address[Emanuele Spadaro]{Dipartimento di Matematica ``Guido Castelnuovo'', Sapienza Universit\`a di Roma, Piazzale Aldo Moro 2, I-00185 Roma, Italy
}
\email[E. Spadaro]{spadaro@mat.uniroma1.it}
\begin{document}
 \maketitle

\begin{abstract}
In this paper we introduce the notion of  parabolic $\alpha$-Riesz  flow, for $\alpha\in(0,d)$,  extending the notion of $s$-fractional   heat flows to negative values of the parameter $s=-\frac{\alpha}{2}$. Then, we determine the limit behaviour of these gradient flows as $\alpha \to 0^+$ and $\alpha \to d^-$. 

To this end we  provide a preliminary $\Gamma$-convergence expansion  for the  Riesz interaction energy functionals. Then we apply  abstract stability results for  uniformly $\lambda$-convex functionals which  guarantee that $\Gamma$-convergence commutes with the  gradient flow structure.  

\vskip5pt
\noindent
\textsc{Keywords:}  Gagliardo seminorms; $\Gamma$-convergence; Fractional heat flow
\vskip5pt
\noindent
\textsc{AMS subject classifications: }
49J45    
35R11   
35K20  
\end{abstract}
\setcounter{tocdepth}{1} 
\tableofcontents
\section*{Introduction}
Fractional versions of local operators attracted much attention in the last decades. For  $s\in(0,1)$ the so-called  $s$-fractional Laplacians and 
$s$-fractional perimeters represent natural and interesting nonlocal counterparts of the classical Laplacian operator and Euclidean  perimeter. 
It is also well known \cite{Landkof, Stein} that for negative values of the parameter $s$, and specifically for $s=- \frac\alpha 2$ with $\alpha\in(0,d)$ ($d$ being the dimension of the ambient space)\,, the $\alpha$-Riesz potentials represent the natural continuation of the one-parameter family of $s$-fractional Laplacians (as well as of $s$-fractional perimeters and curvatures \cite{CDNP}) for negative values of $s$.      

Much analysis has been devoted to studying the limit cases of $s$-fractional perimeters and  Laplacians as $s\to 0^+$ and $s\to 1^-$\,; remarkably, as $s\to 1^-$ one recovers the local canonical objects, while the limit $s\to 0^+$ is somehow more degenerate. The analogous analysis for  Riesz type operators seems to be less investigated, with recent results only for what concerns geometric flows \cite{CDNP}. This paper gives a further contribution in this direction, introducing the notion of 
{\it parabolic $\alpha$-Riesz flows} and
studying the limit cases as $\alpha\to 0^+$ and $\alpha\to d^-$. 

We will now revisit the literature concerning the limit analysis of $s$-fractional operators as  $s\to 0^+$
(referring  the  interested reader to \cite{ADPM, BBM1, BBM2, BPS,  CDKNP, D,  LS, P, MRT} for the case  $s\to 1^-$).

In \cite{MS} the authors show that, as $s\to 0^+$\,, the squared $s$-fractional Gagliardo seminorms multiplied by $s$ pointwise converge to (a multiple of) the squared $L^2$ norm.
A $\Gamma$-convergence expansion (as $s\to 0^+$) with respect to the  $L^2$ topology has been derived in \cite{CDKNP}. 
The first term of such an expansion is, according with \cite{MS}, the squared $L^2$ norm divided by $s$\,, whereas the next order term can be understood as a kind of  $0$-fractional Gagliardo seminorm. The first variation of the zero-order $\Gamma$-limit is trivial, while the first variation of the first order $\Gamma$-limit gives an interesting elliptic operator, referred to as {\it $0$-fractional Laplacian},  which is a slight variant of the logarithmic Laplacian introduced in \cite{CW}.
Still in  \cite{CDKNP}, the minimizing movements approach was used as a bridge between the $\Gamma$-convergence analysis and the stability of the corresponding fractional parabolic flows:  
 After scaling the time by $s$\,, the $s$-fractional parabolic flows converge, as $s\to 0^+$\,, 
to the degenerate parabolic flow of the squared $L^2$ norm (with respect to the $L^2$ metric, itself); this is nothing but a trivial ODE corresponding to the exponential decay of the initial datum. If, instead of scaling in time, we renormalize the parabolic flow by subtracting the forcing leading term (corresponding to the first variation of the squared $L^2$ norm, scaled by $s$), we end up with a much more interesting dynamics: 
the heat flow governed by the $0$-fractional Laplacian, namely,
the parabolic flow of the first order $\Gamma$-limit of the $s$-fractional Gagliardo seminorms.

A first task of this paper is to provide a similar analysis as $s\to 0^-$. As already explained, for negative values of $s$ the analogue of fractional Laplacians is given by $\Aa$-Riesz potentials \cite{Landkof, Stein}, with $\Aa=-2s$\,. The first step in our analysis is to formalize such an analogy introducing, for negative values of $s$,  explicit counterparts of $s$-fractional Gagliardo  seminorms,  Laplacians, and the corresponding parabolic flows, referred to as  parabolic $\alpha$-Riesz flows. 
 Such a track  has already been  pursued  in  the geometric framework \cite{CDNP}; there,   suitable notions of $\alpha$-Riesz perimeters, curvatures and geometric flows  have been introduced and analyzed  as $\alpha \to 0^+$, leading to the geometric flow of the $0$-fractional perimeter introduced in \cite{DNP}.  (In fact, such a  flow also coincides with the limit as $s\to 0^+$ of the $s$-fractional mean curvature flows \cite{CDNP}.)

As in \cite{CDKNP, CDNP}, our approach is purely variational, in the sense that our stability analysis relies  on the gradient flow structure of the parabolic $\alpha$-Riesz flows. 
We consider an open bounded set $\Omega\subset\R^d$ and,
  for every $\alpha\in (0,d)$\,, we define the {\it $\Aa$-Riesz functionals} $\J^\Aa$ on $L^2(\Omega)$ as
 \begin{equation}\label{intro_energy}
 \J^\Aa(u):=-\int_{\Omega}\int_{\Omega}\frac{u(x)u(y)}{|x-y|^{d-\Aa}}\ud y\ud x\,.
  \end{equation}
 We stress that the first variation of $\J^\Aa$ gives back the opposite of the $\alpha$-Riesz potentials $(-\Delta)^{-\frac{\Aa}{2}}$ \cite{Stein, Landkof}. 
  In terms of a more genuine variational viewpoint, $\J^\Aa$ can be understood  as a continuous version of the $XY$ energy functional with long-range interactions and real-valued  spin variable. 
Oversimplifying, in classical $XY$ scalar systems  the spin variable $u$ takes values $\pm 1$, and, for nearest neighbor interactions, the energy can be seen as a discrete perimeter of the phase $\{u=1\}$\,, as well as a discrete Dirichlet energy for functions defined on a lattice and taking only two values. Clearly, long-range variants of the $XY$ model are naturally related with nonlocal perimeters; specifically, with fractional type perimeters, when the interaction kernel is non-integrable, and with Riesz type perimeters, when the kernel is locally integrable as in \eqref{intro_energy}. 
Although such analogies are part of the motivations for our study, here we will not push further considerations within such a statistical mechanics framework; in fact, we focus our analysis on real-valued functions, adopting tools and terminology of fractional seminorms and operators. 
  
  Now we describe in detail our results, starting from the asymptotic behavior of $\J^{\Aa}$ as $\Aa\to 0^+$\,. 
  In Theorem \ref{gammafs} we prove that the functionals $-\Aa\J^\Aa$ $\Gamma$-converge as $\Aa\to 0^+$ to a multiple of $\|\cdot\|^2_{L^2(\Omega)}$ with respect to the weak $L^2$ topology. Some considerations about the signs of the energy functionals are in order: the functionals $-\J^{\Aa}$ penalize more and more, as $\Aa\to 0^+$\,, functions with large $L^2$ norm, which, instead, are favored by the functionals $\J^{\Aa}$\,. Indeed, also after the natural scaling, the functionals $\Aa\J^\Aa$ would $\Gamma$-converge to the relaxation (with respect to the weak $L^2$ topology) of $-\|\cdot\|_{L^2(\Omega)}^2$, which is constantly equal to $-\infty$ (see Remark \ref{secambiosegno}). On the other hand, renormalizing the (otherwise asymptotically ill-posed) family of functionals $\J^\Aa$\,, by setting $\widehat{\J}^{\Aa}(u):=\J^{\Aa}(u) + \frac{d\omega_d}{\alpha} \|u\|_{L^2(\Omega)}^2$\,, it turns out that the energies $\widehat{\J}^{\Aa}$ penalize oscillations. In fact, in Theorem \ref{thm:stozerofo} we prove that the $\Gamma$-limit of $\widehat\J^\Aa$ (as $\alpha\to 0^+$), with respect to the strong $L^2$ topology, 
  is a nonlocal coercive Dirichlet type energy functional $\widehat{\J}^0$\,, which is nothing but 
 the $\Gamma$-limit  derived in \cite{CDKNP} 
 for the renormalized $s$-fractional Gagliardo seminorms (as $s\to 0^+$).  
 Similarly to what observed above, here there is only one option in choosing the signs in order to deal with coercive functionals: reversing all the  signs, one would end up with a kind of relaxation of  $-\widehat{\J}^0$\,, 
  which would give back once again $-\infty$ (Remark \ref{secambiosegno2}).  Similar considerations can be done in the setting of nonlocal geometric flows, but,  trying to draw such a parallelism, a relevant difference emerges: 
   in the geometrical context    $u$ is a characteristic function, so that oscillations relaxing the energy to $-\infty$ are automatically prevented. A relevant consequence is that, 
  contrarily to the case discussed above, in the geometric setting, the right sign is the negative one, since  
   the functionals $\Aa\J^\Aa$ computed on characteristic functions, together with their limit,  provide  a true perimeter, whose first variation is a nonlocal curvature satisfying natural comparison principles \cite{CDNP}. 
   These different paths of geometrical and linear framework rejoin  when looking at the next order, where
   we fix the negative sign for the  $\alpha$-fractional Laplacians. 

 In this framework, we introduce the  parabolic $\Aa$-Riesz flows as the $L^2$ gradient flows of the functionals $\J^{\Aa}$\,, scaled or renormalized according with the $\Gamma$-convergence analysis. As mentioned above, the first variation of the functional $-\J^{\Aa}$ (for $\Aa\in (0,d)$) is already known in the literature  \cite{Stein, Landkof} under the name of $\Aa$-Riesz potential $(-\Delta)^{-\frac{\Aa}{2}}$\,. 
 Therefore, the fractional parabolic flows considered here are nothing but fractional  heat flows governed by $s$-fractional Laplacians for negative values of the fractional parameter $s\in (-\frac d 2,0)$.

As one may expect from the static asymptotic analysis,  scaling the time by $\Aa$\,, the $\Aa$-parabolic flows governed by the operator $(-\Delta)^{-\frac{\Aa}{2}}$ converge (Theorem \ref{convheat0ordsto0})  as $\Aa\to 0^+$, to a degenerate ODE describing the exponential decay of the initial datum, namely, 
to the gradient flow of  the (up to a prefactor) $\Gamma$-limit $\|\cdot\|^2_{L^2(\Omega)}$; this result agrees with   the analysis  of $s$-fractional heat flows as $s\to 0^+$ done in \cite{CDKNP}.  In analogy with the geometric case of fractional curvature flows \cite{CDNP} and following the discussions on the signs done above, one could wonder what happens reversing all the signs. In fact,  the parabolic flows governed by $- \alpha (-\Delta )^{-\frac{\Aa}{2}} $ is actually well defined; however, as explained in Remark \ref{anchequestovale}, our variational methods based on $\Gamma$-convergence  do not provide its convergence (as $\alpha\to 0^+$) to the parabolic flow  $u_t=2u$. The latter describes  the exponential growth of the initial datum, and is consistent with the corresponding geometric flows, where balls tend to expand (instead of shrinking) faster and faster as $\alpha \to 0^+$. 


Following the first order $\Gamma$-convergence analysis, the gradient flow of  the functional $\widehat\J^{\Aa}$\, is a  parabolic flow governed by $-(-\Delta)^{-\frac{\Aa}{2}}$ plus the forcing leading linear term (diverging as $\alpha\to 0^+$). This flow converges (Theorem \ref{convheat1ordsto0}), as $\alpha\to 0^+$, to the gradient flow of  $\widehat\J^{0}$\,, namely, to the parabolic flow governed by  the $0$-fractional Laplacian. 
Again, this result agrees with the analogous analysis done for $s\to 0^+$ in \cite{CDKNP}. 
In fact,
 the proofs of the stability results above are obtained as an application of the abstract approach developed in \cite{CDKNP}; loosely speaking, the latter consists in showing that uniform $\lambda$-convexity and $\lambda$-positivity on  an Hilbert setting is a sufficient condition to guarantee that $\Gamma$-convergence of the underlying functionals commutes with the gradient flow structure. We refer to Subsection \ref{astrrisu} for a short recap on such an abstract setting.

 The same approach, namely, first analyzing the asymptotic behavior of the functionals $\J^{\Aa}$ as $\Aa\to d^-$ and then using the abstract result in \cite{CDKNP} to derive the convergence of the corresponding parabolic flows, allows to study also the limit as $\Aa\to d^-$ for the parabolic $\Aa$-Riesz flows. In such a case, the analysis is simpler. Indeed, the interaction kernel  in \eqref{intro_energy} converges to $1$ (strongly in $L^1_{\mathrm{loc}}(\R^d)$), whence, for every  $u\in L^2(\Omega)$\,,  we deduce that $\J^{\Aa}(u)$ converges (as $\Aa\to d^-$) to   $\J^{d}(u):=-\big(\int_\Omega u\big)^2$\,. Actually, in Theorem \ref{thm:sto-d2}, we prove that $\J^{\Aa_n}(u^n)\to \J^d(u)$ for every $\Aa_n\to d^{-}$ and for every $u^n\weakly u$ in $L^2(\Omega)$\,, which provides in particular the $\Gamma$-convergence of the functionals $\pm\J^{\alpha}$ to $\pm\J^{d}$\,. As a consequence, in Theorem \ref{convheat0ordsto-d2} we prove that the $\Aa$-parabolic flows governed by the operator $\pm (-\Delta)^{-\frac{\Aa}{2}}$ converge (Theorem \ref{convheat0ordsto0}) , as $\Aa\to d^-$, to a degenerate ODE describing now the exponential growth/decay of the average of the initial datum, namely, 
to the gradient flow of  the  $\Gamma$-limit $\mp \big(\int_\Omega u\big)^2$.
Moreover, for every $u^n\weakly u$ in $L^2(\Omega)$ and for every $\alpha_n\to d^-$, the renormalized functionals 
$
\widetilde{\J}^{\Aa_n}(u^n):=\frac{\J^{\Aa_n}(u^n)-\J^{d}(u^n)}{d-\Aa_n}\,
$
converge to the Riesz energy functional with logarithmic kernel 
$$
\widetilde\J^{d}(u):=-\int_{\Omega}\int_{\Omega}u(x)u(y) \log\frac{1}{|x-y|} \ud y\ud x\,.
$$
In particular, as $\Aa\to d^-$\,, the functionals $\pm \widetilde\J^{\Aa}$  $\Gamma$-converge to the functional $\pm \widetilde\J^{d}$ with respect to the weak $L^2$ convergence 
(see Theorem \ref{thm:sto-d2fo}). 
Finally, Theorem \ref{convheat1ordsto-d2} establishes that the gradient flows of $\pm \widetilde{\J}^\Aa$ converge, as $\Aa\to d^-$, to the parabolic flow governed by 
the operator $\pm(-\Delta)^{-\frac d 2} $, namely, by the first variation of $\pm \widetilde\J^{d}$.
 
Summarizing, we have introduced the parabolic $\Aa$-Riesz flows, i.e., the flows governed by the $\Aa$-Riesz potential $(-\Delta)^{-\frac{\Aa}{2}}$\,, for $\Aa\in (0,d)$, and we have analyzed their convergence as $\Aa\to 0^+$ and $\Aa\to d^-$\,. 
Along the way, we have focused on  the interesting class of integral functionals 
\begin{equation}\label{intro_energy2}
-\int_{\Omega}\int_{\Omega}\frac{u(x)u(y)}{|x-y|^{p}}\ud y\ud x\,,
  \end{equation}
representing, for $p\in(0,d)$,  natural extensions of  Gagliardo-type energies. In particular, through rigorous renormalization procedures, we have given a  meaning to    the critical case $p=d$.
It may be of some interest to  extend our methods to   $p\ge d$, adopting suitable  renormalization procedures; in this direction, a popular method is  the so-called 
{\it core radius approach}, already used in \cite{DKP} in the geometric framework. The  general case $p\in\R$, accounting also for negative values of $p$, could deserve further investigations. Ultimately, our analysis invites to a   
methodical comparison between the  one-parameter family \eqref{intro_energy2} of generalized Riesz functionals/potentials and the (generalized) Gagliardo seminorms/fractional Laplacians.

\vskip10pt
\textsc{Acknoledgments:}  LDL and MM are members of the Gruppo Nazionale per l'Analisi Matematica, la Probabilit\`a e le loro Applicazioni (GNAMPA) of the Istituto Nazionale di Alta Matematica (INdAM). The authors have been partially funded by the ERC-STG Grant n. 759229 HiCoS ``Higher Co-dimension
Singularities: Minimal Surfaces and the Thin Obstacle Problem''.
 \vskip10pt

\section{Description of the problem}\label{Sec1}
Let $d\in\N$ and let $\Omega\subset\R^d$ be a bounded and open set  with Lipschitz continuous boundary.
For every $\Aa\in (0,d)$\,, we define the functional $\J^\Aa:L^2(\Omega)\to(-\infty,+\infty)$ as 
\begin{equation}\label{defJs}
\J^\Aa(u):=-\int_{\Omega}\int_{\Omega}\frac{u(x)u(y)}{|x-y|^{d-\Aa}}\ud y\ud x\,.
\end{equation}
We first show that the functionals $\J^\Aa$ are well defined and actually finite in $L^2(\Omega)$. Let $k^{\Aa}:\R^d\to[0,+\infty)$ be the function defined by
\begin{equation}\label{ks}
k^{\Aa}(z):=\frac{1}{|z|^{d-\Aa}}\,.
\end{equation}
Then, $k^\Aa\in L^1_{\mathrm{loc}}(\R^d)$\,. 
Moreover, for every $R>0$ we set 
\begin{equation}\label{ksristr}
k^\Aa_R:=\chi_{B_R(0)} k^\Aa\,;
\end{equation}
it is immediate to check that
\begin{equation}\label{l1ksR}
\|k^{\Aa}_R\|_{L^1(\R^d)}= \frac{d\omega_d}{\Aa}R^{\Aa}\,.
\end{equation}
Now, for any $u\in L^2(\Omega)$ and for every $R>\mathrm{diam}(\Omega)$, we have
\begin{equation}\label{riscr}
\J^\Aa(u)=-\langle \tilde u, \tilde u\ast k^\Aa\rangle_{L^2(\R^d)}=-\langle \tilde u, \tilde u\ast k^\Aa_R\rangle_{L^2(\R^d)}\,,
\end{equation}
where, here and throughout the paper, $\tilde u\in L^2(\R^d)$ is defined by $\tilde u=u$ in $\Omega$ and $\tilde u=0$ in $\R^d\setminus\overline\Omega$\,.

Therefore, by \eqref{riscr} and \eqref{l1ksR}, using H\"older and convolution's Young inequalities, we obtain that for every $u\in L^2(\Omega)$
\begin{equation}\label{welldefJs}
|\J^\Aa(u)|\le \|k^\Aa_R\|_{L^1(\R^d)}\|\tilde u\|^2_{L^2(\R^d)}\le  \frac{d\omega_d}{\Aa}R^{\Aa}\|u\|^2_{L^2(\Omega)}\,,
\end{equation}
which shows that $\J^\Aa$ takes values in $(-\infty,+\infty)$\,.

We highlight that for any $\Aa\in (0,d)$ the Fourier transform $\mathscr{F}[k^\Aa]$ of $k^{\Aa}$ is given by
\begin{equation}\label{fouritra}
\mathscr{F}[k^\Aa](\xi)=2^\Aa\pi^{\frac d 2}\frac{\Gamma(\frac{\Aa}{2})}{\Gamma(\frac{d-\Aa}{2})}k^{d-\Aa}(\xi)\,,
\end{equation}
where $\Gamma$ is the Euler Gamma function, defined by $\Gamma(\beta):=\int_{0}^{+\infty}t^{\beta-1}e^{-t}\ud t$\,, for $\beta>0$\,.
Indeed, let $\ffi:\R^d\to\R$ belong to the Schwarz class. Then, setting $\beta:=d-\Aa$\,, by the very definition of Euler Gamma function and by Fubini Theorem, we have that
\begin{equation}\label{serfaty}
\int_{\R^d}\ffi(x)k^{\Aa}(x)\ud x=\frac{1}{\Gamma(\frac\beta 2)}\int_{0}^{+\infty}\!\ud t \, t^{\frac{\beta}{2}-1} \int_{\R^d}\!\ud x \,\ffi(x)e^{-t|x|^2}\,.
\end{equation}
Moreover, since
\begin{equation*}
\mathscr{F}[e^{-t|\cdot|^2}](\xi):=\int_{\R^d}e^{-t|x|^2}e^{-ix\cdot \xi}\ud x=\pi^{\frac d 2}\frac{e^{-\frac{|\xi|^2}{4t}}}{t^{\frac d 2}},
\end{equation*}
by Parseval identity we obtain
\begin{equation*}
\int_{\R^d}\ffi(x)e^{-t|x|^2}\ud x=\frac{\pi^{\frac d 2}}{(2\pi)^d}\int_{\R^d}\mathscr{F}[\ffi](\xi)\frac{e^{-\frac{|\xi|^2}{4t}}}{t^{\frac d 2}}\ud \xi\,,
\end{equation*}
which, plugged into \eqref{serfaty}, yields
\begin{equation*}
\begin{aligned}
\int_{\R^d}\ffi(x)k^{\Aa}(x)\ud x=&\,\frac{\pi^{\frac d 2}}{(2\pi)^d}\frac{1}{\Gamma(\frac\beta 2)}\int_{0}^{+\infty}\!\ud t \, t^{\frac{\beta-d}{2}-1} \int_{\R^d}
\mathscr{F}[\ffi](\xi)e^{-\frac{|\xi|^2}{4t}}\ud \xi\\
=&\,\frac{\pi^{\frac d 2}}{(2\pi)^d}2^{\Aa}\frac{\Gamma\big(\frac{\Aa}{2}\big)}{\Gamma(\frac{d-\Aa} 2)}\int_{\R^d}{\mathcal{F}[\ffi](\xi)} k^{d-\Aa}(\xi)\ud\xi\,,
\end{aligned}
\end{equation*}
where we have used first the change of variable $t=s|\xi|^2$ and then $u=\frac 1 {4s}$ to get
\begin{equation*}
\begin{aligned}
\int_{0}^{+\infty}t^{\frac{\beta-d}{2} -1}e^{-\frac{|\xi|^2}{4t}}\ud t=&\,|\xi|^{-\Aa}\int_{0}^{+\infty}s^{\frac{\beta-d}{2} -1}e^{-\frac{1}{4s}}\ud s\\
=&\,|\xi|^{-\Aa}2^{d-\beta}\int_{0}^{+\infty}u^{\frac{d-\beta}{2} -1}e^{-u}\ud u\\
=&\, |\xi|^{-\Aa}2^{\alpha}\Gamma\big(\frac{\alpha}{2}\big)\,.
\end{aligned}
\end{equation*}
Therefore, \eqref{fouritra} holds true and hence
the kernels $k^{\Aa}$ are positive definite, i.e.,
\begin{equation}\label{posdef}
\langle v, v\ast  k^{\Aa} \rangle_{L^2(\R^d)}\ge 0\qquad\textrm{for any }v\in L^2(\R^d)\,,
\end{equation}
which, by \eqref{riscr}, yields
\begin{equation}\label{posdef2}
\J^{\Aa}(u)
\le 0\qquad\textrm{for any }u\in L^2(\Omega)\,.
\end{equation}

Moreover, by \eqref{riscr}, using the  symmetry of $k^\Aa_R$, for any $u,v\in L^2(\Omega)$ we have that
\begin{equation}\label{continJs}
\begin{aligned}
|\J^\Aa(u)-\J^\Aa(v)|=&\Big|\langle u-v,(u+v)\ast k^\Aa_R\rangle_{L^2(\R^d)}\Big|\\
\le&\frac{d\omega_d}{\Aa}R^{\Aa} \big(\|u\|_{L^2(\Omega)}+\|v\|_{L^2(\Omega)}\big)\|u-v\|_{L^2(\Omega)}\,,
\end{aligned}
\end{equation}
whence we deduce the continuity of $\J^\Aa$ with respect to the strong convergence in $L^2(\Omega)$\,.

Now, for every $\Aa\in(0,d)$ and for every $u\in L^2(\Omega)$ we set 
\begin{equation}\label{defJshat}
\widehat\J^\Aa(u):=\J^\Aa(u)+\frac{d\omega_d}{\Aa}\|u\|^2_{L^2(\Omega)}\,.
\end{equation}
Let $\B_1:=\{(x,y)\in\R^d\times\R^d\,:\,|x-y|\le 1\}$\,; for every $\Aa\in[0, d)$ we define the functionals $\G^\Aa_1:L^2(\Omega)\to[0,+\infty]$ and $\J^\Aa_1:L^2(\Omega)\to\R$ as
\begin{equation*}
\G^\Aa_1(u):=\frac 1 2\iint_{\B_1}\frac{|\tilde u(x)-\tilde u(y)|^2}{|x-y|^{d-\Aa}}\ud y\ud x\,,\qquad
\J^\Aa_1(u):=-\iint_{(\Omega\times\Omega)\setminus\B_1}\frac{u(x)u(y)}{|x-y|^{d-\Aa}}\ud y\ud x\,.
\end{equation*}
We highlight that for every $\Aa\in(0,d)$ and for every
 $u\in L^2(\Omega)$ it holds
\begin{equation}\label{Jshat_re}
\widehat\J^\Aa(u)=\G^\Aa_1(u)+\J^\Aa_1(u)\,,
\end{equation}
Notice that, by H\"older inequality, for every $\Aa\in[0,d)$ and for every $u\in L^2(\Omega)$
\begin{equation}\label{estJs1}
|\J^\Aa_1(u)|\le \iint_{\Omega\times\Omega}|u(x)u(y)|\ud y\ud x\le \|u\|^2_{L^1(\Omega)}\le |\Omega|\|u\|^2_{L^2(\Omega)}\,,
\end{equation}
and hence $\J^\Aa_1$ is finite on $L^2(\Omega)$ also for $\Aa=0$\,.
In full analogy with \eqref{Jshat_re} we can define the  functional $\widehat\J^\Aa$ also for $\Aa=0$ by setting
\begin{equation}\label{J0hat}
\widehat\J^0(u):=\G^0_1(u)+\J^0_1(u)\,,
\end{equation}
for every $u\in L^2(\Omega)$\,. In view of \eqref{estJs1}, the functional $\widehat\J^0:L^2(\Omega)\to (-\infty,+\infty]$ is well defined.
Notice that $\widehat\J^0$ in \eqref{J0hat} coincides with the functional $\hat F^0$ defined in \cite[formula (1.9)]{CDKNP}.
\begin{remark}\label{DCT}
In view of its very definition \eqref{defJshat} and of
 \eqref{continJs},  the functionals $\widehat{\J}^\Aa$ with $\Aa\in(0,d)$ are continuous with respect to the strong convergence in $L^2(\Omega)$\,.
 Moreover, for every $\Aa\in [0,d)$ and
  for all $u,\, v \in L^2(\Omega)$ we have
\begin{equation}\label{DCT2}
\begin{aligned}
|\J^\Aa_1(u) - \J^\Aa_1(v)| \le &  (\|u\|_{L^1(\Omega)} + \|v\|_{L^1(\Omega)})\|u-v\|_{L^1(\Omega)}
\\
\le & |\Omega| (\|u\|_{L^2(\Omega)} + \|v\|_{L^2(\Omega)})\|u-v\|_{L^2(\Omega)}\, . 
\end{aligned}
\end{equation}
which shows, in particular, that 
the functionals $\widehat{\J}^{0}$ are lower semicontinuous with respect to the strong convergence in $L^2(\Omega)$\,.
\end{remark}
The following result will be used throughout the paper.
\begin{lemma}\label{always}
Let $k\in L^2(\R^d)$ with $\supp k\subset B_{R}(0)$ for some $R>0$\,.
Let moreover $\{v^n\}_{n\in\N}\subset L^2(\R^d)$ be such that $\supp v^n\subset B_R(0)$ and
 $v^n\weakly v$ for some $v\in L^2(\R^d)$\,. Then, $v^n\ast k\to v\ast k$ (strongly) in $L^2(\R^d)$\,.
\end{lemma}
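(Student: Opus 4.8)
The plan is to deduce strong $L^2$ convergence of $v^n\ast k$ from three facts: (i) $v^n\ast k$ converges to $v\ast k$ pointwise everywhere; (ii) the sequence $v^n\ast k$ is uniformly bounded in $L^\infty$; (iii) all the functions $v^n\ast k$ and $v\ast k$ are supported in one fixed bounded ball. Together these allow a direct application of the dominated convergence theorem.

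First I would fix $x\in\R^d$ and write $(v^n\ast k)(x)=\int_{\R^d}v^n(y)k(x-y)\ud y=\langle v^n,k(x-\cdot)\rangle_{L^2(\R^d)}$. Since $k(x-\cdot)$ is a fixed element of $L^2(\R^d)$, the weak convergence $v^n\weakly v$ immediately gives $(v^n\ast k)(x)\to(v\ast k)(x)$, i.e.\ pointwise convergence on all of $\R^d$. Next, because $\supp v^n\subset B_R(0)$ and $\supp k\subset B_R(0)$, we have $\supp(v^n\ast k)\subset B_{2R}(0)$ and likewise $\supp(v\ast k)\subset B_{2R}(0)$; moreover, being weakly convergent, the sequence is bounded, say $M:=\sup_n\|v^n\|_{L^2(\R^d)}<\infty$, and also $\|v\|_{L^2(\R^d)}\le M$. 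By Young's convolution inequality (equivalently, Cauchy--Schwarz in the identity above) this yields $\|v^n\ast k\|_{L^\infty(\R^d)}\le\|v^n\|_{L^2(\R^d)}\|k\|_{L^2(\R^d)}\le M\|k\|_{L^2(\R^d)}$, and the same bound for $v\ast k$. Hence
$$|v^n\ast k-v\ast k|^2\le 4M^2\|k\|_{L^2(\R^d)}^2\,\chi_{B_{2R}(0)}\qquad\text{a.e. in }\R^d,$$
and the right-hand side is integrable since $|B_{2R}(0)|<\infty$.

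Finally, since $|v^n\ast k-v\ast k|^2\to 0$ pointwise and is dominated by a fixed integrable function, the dominated convergence theorem gives $\|v^n\ast k-v\ast k\|_{L^2(\R^d)}^2=\int_{\R^d}|v^n\ast k-v\ast k|^2\ud x\to 0$, which is the assertion. There is no real obstacle here; the only place the hypotheses are genuinely used is the uniform boundedness of the supports, which is precisely what makes the dominating constant function integrable — dropping it would leave only local, not global, strong convergence. As an alternative one could argue that $w\mapsto w\ast k$ is a Hilbert--Schmidt, hence compact, operator from $L^2(B_R(0))$ into $L^2(\R^d)$, its integral kernel $(x,y)\mapsto k(x-y)$ being square integrable on $B_{2R}(0)\times B_R(0)$; compact operators send weakly convergent sequences to strongly convergent ones.
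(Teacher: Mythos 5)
Your proof is correct, and it takes a genuinely different route from the paper's. Both arguments start the same way, obtaining pointwise convergence of $(v^n\ast k)(x)$ to $(v\ast k)(x)$ by testing the weak convergence against the fixed $L^2$ function $k(x-\cdot)$. The paper then upgrades this to strong $L^2$ convergence via the Fr\'echet--Kolmogorov compactness criterion: writing $g^n:=v^n-v$, it bounds $\|[g^n\ast k](\cdot+h)-[g^n\ast k](\cdot)\|_{L^2}\le\|g^n\|_{L^2}\,\|k(\cdot+h)-k(\cdot)\|_{L^1}$ (using that $k$, being in $L^2$ with compact support, lies in $L^1$ and that translation is continuous in $L^1$), so the sequence $g^n\ast k$ is precompact in $L^2$ and its only possible limit point is $0$. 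You instead combine the pointwise convergence with the uniform $L^\infty$ bound $\|v^n\ast k\|_{L^\infty}\le M\|k\|_{L^2}$ and the common compact support $B_{2R}(0)$ of all the convolutions, and conclude by dominated convergence. Your route is more elementary (no compactness theorem needed) and makes completely explicit where the hypotheses on the supports enter, namely in producing an integrable dominating function; the paper's route is the one that generalizes more readily when $k$ is merely $L^1$ rather than $L^2$, since then $v^n\ast k$ need not be bounded pointwise and the Cauchy--Schwarz step is unavailable. Your closing remark that $w\mapsto w\ast k$ is Hilbert--Schmidt from $L^2(B_R(0))$ to $L^2(\R^d)$ is also a valid one-line alternative and is perhaps the cleanest conceptual explanation of why weak convergence is turned into strong convergence here.
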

\begin{proof}
We set $g^n:=v^n-v$\,, so that $g^n\weakly 0$ in $L^2(B_R(0))$\,.
 For every $x\in\R^d$\,, $[g^n\ast k](x)\to 0$ as $n\to +\infty$\,. Moreover, let $h\in\R^d$\,; by Young's convolution inequality, we have
 \begin{equation}\label{claim010}
 \Big\|[g^n\ast k](\cdot+h)-[g^n\ast k](\cdot)\Big\|_{L^2(\R^d)}\le \|g^n\|_{L^2(\R^d)}\|k(\cdot+h)-k(\cdot)\|_{L^1(\R^d)}\,;
 \end{equation}
 since the right-hand side of \eqref{claim010} tends to $0$ as $h\to 0$\,, uniformly with respect to $n$\,, by the Fr\'echet-Kolmogorov criterion we get the claim. 
 \end{proof}
\section{$\Gamma$-convergence for the functionals $\J^{\Aa}$ and $\widehat\J^\Aa$ as $\Aa\to 0^+$}
We start by studying the asymptotic behavior of the functionals $-\Aa\J^\Aa$\,.
\begin{theorem}\label{gammafs}
Let $\{\Aa_n\}_{n\in\N}\subset(0,d)$ be such that $\Aa_n\to 0^+$ as $n\to +\infty$. 
\begin{itemize}
\item[(i)]($\Gamma$-liminf inequality) For every $u\in L^2(\Omega)$ and for every $\{u^n\}_{n\in\N}\subset L^2(\Omega)$ with $u^n\weakly u$ in $L^2(\Omega)$, it holds
\begin{equation*}
{d\omega_d}\|u\|^2_{L^2(\Omega)}\le \liminf_{n\to +\infty}-\Aa_n\J^{\Aa_n}(u^n).
\end{equation*}
\item[(ii)]($\Gamma$-limsup inequality) For every $u\in L^2(\Omega)$ 
\begin{equation*}
 \lim_{n\to +\infty}-\Aa_n\J^{\Aa_n}(u)={d\omega_d}\|u\|^2_{L^2(\Omega)}\,.
\end{equation*}
\end{itemize}
\end{theorem}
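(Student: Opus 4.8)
The plan is to exploit the convolution representation \eqref{riscr}, which gives
$-\Aa_n\J^{\Aa_n}(u^n)=\Aa_n\langle \tilde u^n,\tilde u^n\ast k^{\Aa_n}_R\rangle_{L^2(\R^d)}$ for any fixed $R>\dia(\Omega)$, and to understand the behaviour of the rescaled kernels $\Aa_n k^{\Aa_n}_R$ as $n\to+\infty$. The key observation is that $\Aa_n k^{\Aa_n}_R=\Aa_n|z|^{\Aa_n-d}\chi_{B_R(0)}$ is a nonnegative $L^1$ function with $\|\Aa_n k^{\Aa_n}_R\|_{L^1(\R^d)}=d\omega_d R^{\Aa_n}\to d\omega_d$ by \eqref{l1ksR}, and it concentrates near the origin: for every fixed $\delta>0$ the mass $\|\Aa_n k^{\Aa_n}_R\|_{L^1(\R^d\setminus B_\delta(0))}=d\omega_d(R^{\Aa_n}-\delta^{\Aa_n})\to 0$. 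Hence $\Aa_n k^{\Aa_n}_R\weakstar d\omega_d\,\delta_0$ as measures (and more precisely they form an approximate identity of total mass tending to $d\omega_d$).

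For part (ii), the $\Gamma$-limsup (recovery) inequality, one takes the constant sequence $u^n=u$ and must show $\Aa_n\langle \tilde u,\tilde u\ast k^{\Aa_n}_R\rangle\to d\omega_d\|u\|_{L^2(\Omega)}^2$. First I would establish it for $u$ smooth and compactly supported in $\Omega$, where $\tilde u\ast(\Aa_n k^{\Aa_n}_R)\to d\omega_d\,\tilde u$ uniformly (and in $L^2$) by the standard approximate-identity estimate, splitting the convolution integral into a small ball $B_\delta$, on which one uses continuity of $\tilde u$ and the mass bound, and its complement, on which one uses the vanishing tail mass together with $\|\tilde u\|_\infty$. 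Then I would pass to general $u\in L^2(\Omega)$ by density, using \eqref{welldefJs} (or rather its rescaled version $|\Aa_n\J^{\Aa_n}(v)|\le d\omega_d R^{\Aa_n}\|v\|_{L^2(\Omega)}^2$) to control the error: both $-\Aa_n\J^{\Aa_n}$ and $d\omega_d\|\cdot\|_{L^2(\Omega)}^2$ are, uniformly in $n$, controlled by the $L^2$ norm squared with a bounded multiplicative constant, and the difference of quadratic forms is continuous in $L^2$ uniformly in $n$ by a polarization/Cauchy--Schwarz argument as in \eqref{continJs}; hence the convergence on a dense set extends to all of $L^2(\Omega)$.

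For part (i), the $\Gamma$-liminf inequality, the point is lower semicontinuity of the quadratic forms $v\mapsto \langle\tilde v,\tilde v\ast(\Aa_n k^{\Aa_n}_R)\rangle$ along a weakly converging sequence when the kernels themselves converge. I would write, for a fixed mollification parameter $\delta>0$, a decomposition $\Aa_n k^{\Aa_n}_R=\rho_n^\delta+\sigma_n^\delta$ where $\rho_n^\delta:=\Aa_n k^{\Aa_n}_R\chi_{B_\delta(0)}$ carries almost all the mass and $\sigma_n^\delta$ is supported in the annulus $B_R\setminus B_\delta$ with mass $\to 0$. The $\sigma_n^\delta$ term contributes negligibly, uniformly in the (bounded) sequence $u^n$, since $\|u^n\|_{L^2}$ is bounded (weak convergence) and $\|\sigma_n^\delta\|_{L^1}\to 0$. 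For the main term, since $\rho_n^\delta\ge 0$ with mass $m_n^\delta:=\|\rho_n^\delta\|_{L^1}\to d\omega_d$ as $n\to\infty$ then $\delta\to0$, I would use positive-definiteness: writing $\rho_n^\delta$ as a (normalized) probability density $p_n^\delta$ times $m_n^\delta$, one has $\langle\tilde u^n,\tilde u^n\ast\rho_n^\delta\rangle=m_n^\delta\int\!\!\int \tilde u^n(x)\tilde u^n(y)p_n^\delta(x-y)\ud x\ud y$, and an elementary inequality (Jensen applied to the probability measure $p_n^\delta$, or completing the square) gives $\int\!\!\int \tilde u^n(x)\tilde u^n(y)p_n^\delta(x-y)\ud x\ud y\ge \frac{1}{|B_R|'}\big(\text{something}\big)$ — more cleanly, I would instead pair against a fixed test function: for $\phi\in C_c^\infty(\R^d)$ one has $\langle\tilde u^n,\tilde u^n\ast\rho_n^\delta\rangle\ge 2\langle\phi,\tilde u^n\ast\rho_n^\delta\rangle-\langle\phi,\phi\ast\rho_n^\delta\rangle$ by positive-definiteness (this is just $\langle\tilde u^n-\phi,(\tilde u^n-\phi)\ast\rho_n^\delta\rangle\ge0$), then let $n\to\infty$ using Lemma \ref{always} to pass to the limit in the linear and purely-test-function terms (here $\tilde u^n\ast\rho_n^\delta\to d\omega_d\,\tilde u$ weakly and $\phi\ast\rho_n^\delta\to d\omega_d\phi$ in $L^2$ after first sending $n\to\infty$ with $\delta$ fixed and then $\delta\to0$), obtaining $\liminf_n\langle\tilde u^n,\tilde u^n\ast\rho_n^\delta\rangle\ge d\omega_d(2\langle\phi,\tilde u\rangle-\|\phi\|_{L^2}^2)$, and finally optimize over $\phi$, taking $\phi\to\tilde u$ in $L^2$, to get $\liminf\ge d\omega_d\|\tilde u\|_{L^2(\R^d)}^2=d\omega_d\|u\|_{L^2(\Omega)}^2$.

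I expect the main obstacle to be the careful bookkeeping in the $\Gamma$-liminf: one has a double limit (the vanishing of the annular part and the concentration of the mass $m_n^\delta\to d\omega_d$ both require sending $\delta\to 0$, but only after $n\to\infty$), and one must make sure the test-function argument is uniform enough that the order of limits is legitimate. Lemma \ref{always} is tailor-made for the step $\phi\ast(\Aa_n k^{\Aa_n}_R)\to d\omega_d\phi$ strongly in $L^2$ (apply it with the kernels $\Aa_n k^{\Aa_n}_R-d\omega_d\,(\text{approx.\ identity})$, or directly check the approximate-identity convergence since $\phi$ is smooth), and combined with the positive-definiteness inequality \eqref{posdef} it cleanly bypasses any need for strong convergence of $u^n$ itself. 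The $\Gamma$-limsup part is essentially a computation and should present no real difficulty beyond the density argument.
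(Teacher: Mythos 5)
Your overall strategy is sound and in fact very close to the paper's: the proof there also starts from \eqref{riscr}, expands the quadratic form around the weak limit (which amounts exactly to your inequality $\langle v,v\ast k\rangle\ge 2\langle\phi, v\ast k\rangle-\langle\phi,\phi\ast k\rangle$ with the specific choice $\phi=\tilde u$, so no optimization over smooth $\phi$ and no density step are needed), discards the nonnegative term $\langle\tilde u^n-\tilde u,(\tilde u^n-\tilde u)\ast \Aa_n k^{\Aa_n}\rangle\ge 0$ via \eqref{posdef}, and computes the two cross terms using that $\tilde u\ast(\Aa_n k^{\Aa_n}_R)\to d\omega_d\,\tilde u$ strongly in $L^2$. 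Part (ii) is then just the constant sequence; your detour through smooth functions plus the uniform continuity estimate of the type \eqref{continJs} is more laborious but correct.

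The genuine problem is in your treatment of the ``main term'' in (i): you invoke positive-definiteness for the $\delta$-truncated kernel $\rho_n^\delta=\Aa_n k^{\Aa_n}\chi_{B_\delta(0)}$, asserting $\langle\tilde u^n-\phi,(\tilde u^n-\phi)\ast\rho_n^\delta\rangle\ge 0$. This does not follow from \eqref{posdef}. What \eqref{fouritra}--\eqref{posdef} give is positive-definiteness of the full kernel $k^{\Aa}$ on $\R^d$; the paper may use it for the $R$-truncated kernel only because all functions involved are supported in a set of diameter less than $R$, so that $\langle v,v\ast k^{\Aa}_R\rangle=\langle v,v\ast k^{\Aa}\rangle$. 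For your $\rho_n^\delta$ with $\delta\to 0$ this identification fails, and truncating a positive-definite kernel to a small ball destroys positive-definiteness in general (already $\chi_{B_\delta}$ has a sign-changing Fourier transform), so the displayed inequality is unjustified and cannot be taken for granted. The gap is easily repaired with tools you already set up: either drop the $\delta$-splitting altogether and run the test-function inequality directly with $\Aa_n k^{\Aa_n}_R$, enlarging $R$ so that $R>\mathrm{diam}\big(\overline\Omega\cup\supp\phi\big)$, which restores the legitimate use of \eqref{posdef}; or keep the splitting and note that $\langle v,v\ast\rho_n^\delta\rangle=\langle v,v\ast \Aa_n k^{\Aa_n}_{R}\rangle-\langle v,v\ast\sigma_n^\delta\rangle\ge -\|\sigma_n^\delta\|_{L^1(\R^d)}\|v\|_{L^2(\R^d)}^2$, where $\sigma_n^\delta$ is the annular part whose $L^1$ norm you have already shown to vanish as $n\to+\infty$. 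With the first repair the double-limit bookkeeping you worry about disappears entirely, since the whole argument runs at fixed $R$.
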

\begin{proof}
For every $n\in\N$  we set $\bar k^{\Aa_n}:=\Aa_nk^{\Aa_n}$ with $k^\Aa$ defined in \eqref{ks} and, for every $R>\mathrm{diam}(\Omega)$ we set $\bar k^{\Aa_n}_R:=\bar k^{\Aa_n}\chi_{B_R(0)}=\Aa_nk^{\Aa_n}_R$\,.

{\it Proof of (i).} 
Since $\bar k^{\Aa_n}_R\ud z\weakstar {d\omega_d}\delta_0$ we have that $\tilde u\ast \bar k^{\Aa_n}_R\to d\omega_d\tilde u$ (strongly) in $L^2(\Omega)$\,, and hence
\begin{eqnarray}\label{16feb}
\lim_{n\to +\infty}\langle \tilde u,(\tilde u^n-\tilde u)\ast \bar k^{\Aa_n}_R\rangle_{L^2(\R^d)}&=&\lim_{n\to +\infty}\langle \tilde u\ast \bar k^{\Aa_n}_R,\tilde u^n-\tilde u\rangle_{L^2(\R^d)}=0\,,\\ \label{perlimsup}
\lim_{n\to +\infty}\langle \tilde u^n,\tilde u\ast \bar k^{\Aa_n}_R\rangle_{L^2(\R^d)}&=&d\omega_d\|u\|^2_{L^2(\Omega)}\,.
\end{eqnarray}
Moreover, by \eqref{riscr}, we have 
\begin{equation}\label{riscr2}
\begin{aligned}
-\Aa_n\J^{\Aa_n}(u^n)
=&\langle \tilde u^n-\tilde u,(\tilde u^n-\tilde u)\ast \bar k^{\Aa_n}\rangle_{L^2(\R^d)}+\langle \tilde u,(\tilde u^n-\tilde u)\ast \bar k^{\Aa_n}_R\rangle_{L^2(\R^d)}\\
&+\langle \tilde u^n,\tilde u\ast \bar k^{\Aa_n}_R\rangle_{L^2(\R^d)}\,,
\end{aligned}
\end{equation}
and hence, by \eqref{posdef2}, \eqref{16feb}, and \eqref{perlimsup}, we obtain
\begin{equation}\label{anchealtrolim}
\begin{aligned}
\liminf_{n\to +\infty}-\Aa_n\J^{\Aa_n}(u^n)
\ge&\liminf_{n\to +\infty}\langle \tilde u^n-\tilde u,(\tilde u^n-\tilde u)\ast \bar k^{\Aa_n}\rangle_{L^2(\R^d)}\\
&+\liminf_{n\to +\infty}\langle \tilde u,(\tilde u^n-\tilde u)\ast \bar k^{\Aa_n}_R\rangle_{L^2(\R^d)}
+\liminf_{n\to +\infty}\langle \tilde u^n,\tilde u\ast \bar k^{\Aa_n}_R\rangle_{L^2(\R^d)}\\
\ge&d\omega_d\|u\|^2_{L^2(\Omega)}\,.
\end{aligned}
\end{equation}

{\it Proof of (ii).} The claim follows directly by \eqref{perlimsup} and by the first equality in \eqref{riscr}.
\end{proof}
\begin{remark}\label{secambiosegno}
\rm{
Let $\{\Aa_n\}_{n\in\N}\subset(0,d)$ with $\Aa_n\to 0^+$ as $n\to +\infty$\,.
By \eqref{16feb}-\eqref{riscr2}, for any $u\in L^2(\Omega)$ and for any sequence $\{u^n\}_{n\in\N}\subset L^2(\Omega)$ with $u^n\to u$ (strongly) in $L^2(\Omega)$\,, we have that
\begin{equation*}
\lim_{n\to +\infty}\Aa_n\J^{\Aa_n}(u^n)=-d\omega_d\|u\|^2_{L^2(\Omega)}\,,
\end{equation*}
which shows, in particular, that the $\Gamma$-limit of the functionals $\Aa\J^{\Aa}$ (as $\Aa\to 0^+$) with respect to the strong convergence in $L^2(\Omega)$ is the functional $-d\omega_d\|\cdot\|^2_{L^2(\Omega)}$\,. 
This fact implies that the $\Gamma$-limit of  $\Aa\J^{\Aa}$ (as $\Aa\to 0^+$) with respect to the weak convergence in $L^2(\Omega)$ equals to $-\infty$\,.
}
\end{remark}
In Theorem \ref{thm:stozerofo} below we show that the functional $\widehat\J^{\Aa}$ $\Gamma$-converges to the functional $\widehat\J^{0}$\,.
To this end, following \cite[Remark 1.3]{CDKNP}, we introduce the space 
\begin{equation*}
H^0_0(\Omega):=\{u\in L^2(\Omega)\,:\,\G^0_1(u)<+\infty\}\,.
\end{equation*}
Such a functional space could be endowed with a $0$-Gagliardo type norm defined as
\begin{equation*}
\|u\|_{H_0^0(\Omega)}:=\|u\|_{L^2(\Omega)}+\big(2\G^0_1(u)\big)^{\frac 1 2}\,.
\end{equation*}
\begin{theorem}\label{thm:stozerofo}
Let $\{\Aa_n\}_{n\in\N}\subset(0,d)$ be such that $\Aa_n\to 0^+$ as $n\to +\infty$\,. The following $\Gamma$-convergence result holds true.
\begin{itemize}
\item[(i)] (Compactness) Let $\{u^n\}_{n\in\N}\subset L^2(\Omega)$ be such that
\begin{equation}\label{COMP2}
\widehat\J^{\Aa_n}(u^n)+ (|\Omega|+m) \| u^n \|^2_{L^2(\Omega)}\le M,
\end{equation} 
for some constants $m,M>0$ independent of $n$\,. Then, up to a subsequence, $u^n\to u$ strongly in $L^2(\Omega)$ for some $u\in H_0^0(\Omega)$\,.
\item[(ii)] ($\Gamma$-liminf inequality) For every $u\in H^0_0(\Omega)$ and for every $\{u^n\}_{n\in\N}\subset L^2(\Omega)$ with $u^n\to u$ in $L^2(\Omega)$\,, it holds
$$
\widehat\J^0(u)\le\liminf_{n\to +\infty}\widehat\J^{\Aa_n}(u^n)\,.
$$
\item[(iii)] ($\Gamma$-limsup inequality) For every $u\in H_0^0(\Omega)$ there exists $\{u^n\}_{n\in\N}\subset L^2(\Omega)$ with $u^n\to u$ in $L^2(\Omega)$ such that
$$
\widehat\J^0(u)=\lim_{n\to +\infty}\widehat\J^{\Aa_n}(u^n)\,.
$$
\end{itemize}
\end{theorem}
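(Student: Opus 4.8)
The plan is to establish the three items by reducing everything to the decomposition $\widehat\J^\Aa = \G^\Aa_1 + \J^\Aa_1$ from \eqref{Jshat_re} and \eqref{J0hat}, handling the two pieces separately. The term $\J^\Aa_1$ is the easy one: by the estimate \eqref{DCT2} it is continuous with respect to strong $L^2$ convergence, uniformly in $\Aa\in[0,d)$ in the sense that the continuity modulus is controlled by $|\Omega|$ and the $L^2$ norms; moreover the kernel $\chi_{(\Omega\times\Omega)\setminus\B_1}|x-y|^{-(d-\Aa_n)}$ converges pointwise (and is bounded by $1$) to $\chi_{(\Omega\times\Omega)\setminus\B_1}$, so $\J^{\Aa_n}_1(u)\to\J^0_1(u)$ for fixed $u$ by dominated convergence, and combined with the uniform continuity one gets $\J^{\Aa_n}_1(u^n)\to\J^0_1(u)$ whenever $u^n\to u$ strongly. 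So all the real content is in the singular term $\G^\Aa_1$.

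For $\G^\Aa_1$ I would prove a joint lower semicontinuity/convergence statement: if $u^n\to u$ in $L^2(\Omega)$ then $\G^0_1(u)\le\liminf_n\G^{\Aa_n}_1(u^n)$, and conversely that the constant recovery sequence $u^n\equiv u$ already gives $\lim_n\G^{\Aa_n}_1(u)=\G^0_1(u)$. For the liminf: write $\G^\Aa_1(u) = \tfrac12\iint_{\B_1}\frac{|\tilde u(x)-\tilde u(y)|^2}{|x-y|^{d-\Aa}}\,dy\,dx$ and note $\frac{1}{|z|^{d-\Aa_n}}\ge \frac{1}{|z|^d}$ is false — instead use $|z|\le 1$ on $\B_1$ so $\frac{1}{|z|^{d-\Aa_n}}=\frac{1}{|z|^d}|z|^{\Aa_n}\ge \frac{1}{|z|^d}$ is again the wrong direction; the correct monotonicity is $|z|^{\Aa_n}\le 1$, giving $\frac{1}{|z|^{d-\Aa_n}}\le\frac{1}{|z|^d}$, which is also not what is wanted directly. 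The right approach is Fatou's lemma applied to the integrand after passing to a pointwise a.e.\ convergent subsequence $\tilde u^n(x)-\tilde u^n(y)\to\tilde u(x)-\tilde u(y)$ together with $|x-y|^{-(d-\Aa_n)}\to |x-y|^{-d}$ pointwise on $\B_1$; since the integrands are nonnegative, $\G^0_1(u)\le\liminf_n\G^{\Aa_n}_1(u^n)$ follows, and in particular $u\in H^0_0(\Omega)$. For the limsup with the constant sequence: $\G^{\Aa_n}_1(u)=\tfrac12\iint_{\B_1}\frac{|\tilde u(x)-\tilde u(y)|^2}{|x-y|^{d-\Aa_n}}\,dy\,dx$ and since $|x-y|^{-(d-\Aa_n)}\le|x-y|^{-d}$ on $\B_1$ with $\G^0_1(u)<+\infty$, dominated convergence applies and gives $\G^{\Aa_n}_1(u)\to\G^0_1(u)$. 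This also handles the recovery sequence in (iii) — one simply takes $u^n=u$, and then $\widehat\J^{\Aa_n}(u)=\G^{\Aa_n}_1(u)+\J^{\Aa_n}_1(u)\to\G^0_1(u)+\J^0_1(u)=\widehat\J^0(u)$.

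For compactness (i): from \eqref{COMP2}, \eqref{Jshat_re}, and the bound $|\J^{\Aa_n}_1(u^n)|\le|\Omega|\|u^n\|^2_{L^2}$ from \eqref{estJs1}, one gets $\G^{\Aa_n}_1(u^n)+m\|u^n\|^2_{L^2(\Omega)}\le M$, hence $\{u^n\}$ is bounded in $L^2(\Omega)$ and, up to a subsequence, $u^n\weakly u$ in $L^2(\Omega)$. To upgrade to strong convergence one needs an equi-integrability/compactness estimate from the uniform bound on $\G^{\Aa_n}_1$. The standard route is a fractional-to-$L^2$ compact embedding: bound a genuine Gagliardo seminorm of small order $s_n$ (with $s_n\to 0$) in terms of $\G^{\Aa_n}_1(u^n)$ plus $\|u^n\|^2_{L^2}$ — here one exploits that on $\B_1$ the kernel $|x-y|^{-d}$ dominates $|x-y|^{-(d-2s_n)}$ for any $s_n>0$, so $\G^{\Aa_n}_1(u^n)$ controls $\iint_{\B_1}\frac{|\tilde u^n(x)-\tilde u^n(y)|^2}{|x-y|^{d-2s_n}}\,dy\,dx$ up to adding the far-field piece, which is $\lesssim\|u^n\|^2_{L^2}$ — and then invoke compactness of $H^{s_n}$ in $L^2$ on bounded domains (uniformly, following \cite[Remark 1.3]{CDKNP}). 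I expect this compactness upgrade from weak to strong to be the main technical obstacle; once strong $L^2$ compactness of sublevel sets is in hand, the lower semicontinuity of $\widehat\J^0$ under strong convergence (Remark \ref{DCT}) gives $u\in H^0_0(\Omega)$ and closes part (i). Parts (ii) and (iii) then follow by combining the $\G^\Aa_1$-analysis above with the uniform continuity of $\J^\Aa_1$.
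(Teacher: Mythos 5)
Your treatment of parts (ii) and (iii) is sound and essentially matches the paper's: the decomposition $\widehat\J^{\Aa}=\G^{\Aa}_1+\J^{\Aa}_1$, the uniform continuity estimate \eqref{DCT2} plus dominated convergence for $\J^{\Aa}_1$, and Fatou along an a.e.\ convergent subsequence for $\G^{\Aa}_1$ are exactly what is needed. For (iii) your observation that the constant sequence $u^n\equiv u$ is a recovery sequence is correct and in fact slightly more direct than the paper's appeal to a density argument: since $|x-y|^{-(d-\Aa_n)}\le|x-y|^{-d}$ on $\B_1$ and $\G^0_1(u)<+\infty$ for $u\in H^0_0(\Omega)$, dominated convergence applies (this is special to the Riesz range $\Aa>0$, where the approximating kernels are dominated by the limit kernel; in the Gagliardo case $s\to0^+$ of \cite{CDKNP} the domination fails and density is genuinely needed).

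The gap is in the compactness part (i). The route you sketch --- controlling a positive-order Gagliardo seminorm $H^{s_n}$ by $\G^{\Aa_n}_1(u^n)+\|u^n\|^2_{L^2}$ and invoking a compact embedding --- cannot work. The quantity $\iint_{\B_1}|v(x)-v(y)|^2|x-y|^{-(d-2s)}\ud y\ud x$ with $s>0$ is \emph{not} the $H^{s}$ seminorm (that would have kernel $|x-y|^{-(d+2s)}$); for each fixed $\Aa>0$ one has $\G^{\Aa}_1(v)\le \frac{2d\omega_d}{\Aa}\|v\|^2_{L^2(\Omega)}$ for \emph{every} $v\in L^2(\Omega)$ by Young's inequality, so a bound on $\G^{\Aa_n}_1(u^n)$ at fixed $n$ carries no compactness whatsoever --- its sublevel sets intersected with $L^2$-balls are just $L^2$-balls. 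The strong compactness is produced only by the degeneration of the constant $\frac{2d\omega_d}{\Aa_n}$ as $\Aa_n\to0^+$, and extracting it requires a quantitative argument. The paper's Lemma \ref{convlemma} (following Jarohs--Weth) does this by mollifying against the normalized annulus kernels $w^{\Aa_n}_\delta=j^{\Aa_n}_\delta/\|j^{\Aa_n}_\delta\|_{L^1}$ and using Jensen's inequality to get
\begin{equation*}
\|u^n-w^{\Aa_n}_\delta\ast u^n\|^2_{L^2(\R^d)}\le \frac{2}{\|j^{\Aa_n}_\delta\|_{L^1(\R^d)}}\,\G^{\Aa_n}_1(u^n)\,,
\end{equation*}
where $\|j^{\Aa_n}_\delta\|_{L^1}=d\omega_d\frac{1-\delta^{\Aa_n}}{\Aa_n}\to d\omega_d|\log\delta|$ as $n\to+\infty$; this makes the left-hand side small for small $\delta$, uniformly in large $n$. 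Combined with $w^{\Aa_n}_\delta\ast u^n\to w^0_\delta\ast u$ strongly (from $u^n\weakly u$ and Lemma \ref{always}) and $w^0_\delta\ast u\to u$ as $\delta\to0$, the triangle inequality yields $u^n\to u$ in $L^2$. Without this normalization-by-$|\log\delta|$ mechanism your argument does not close: ``compactness of $H^{s_n}$ in $L^2$'' with $s_n\to0^+$ is in any case not uniform in $n$ (the limit space $H^0=L^2$ is not compactly embedded in itself), so even the fallback you gesture at would need a quantitative substitute of exactly this kind.
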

In order to prove (i) of Theorem \ref{thm:stozerofo} we adopt a strategy similar to that used in the proof of \cite[Theorem 1.1]{JW} . To this end we prove the following result.
\begin{lemma}\label{convlemma}
Let $\{\Aa_n\}_{n\in\N}\subset(0,d)$ be such that $\Aa_n\to 0^+$ as $n\to +\infty$\,. 
Let moreover $\{u^n\}_{n\in\N}\subset L^2(\Omega)$ 
with
\begin{equation}\label{boundcomp}
\|u^n\|_{L^2(\Omega)}+\G^{\Aa_n}_1(u^n)\le C\,,
\end{equation}
for some constant $C>0$ (independent of $n$)\,. Then, up to a subsequence, $u^n\to u$ in $L^2(\Omega)$ for some $u\in H^0_0(\Omega)$\,.
\end{lemma}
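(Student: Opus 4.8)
The plan is to reduce the statement to a compactness criterion in $L^2$, using the bound on $\G^{\Aa_n}_1$ to produce equi-integrability of translates. First I would work with the extensions $\tilde u^n \in L^2(\R^d)$, which are supported in $\overline\Omega$, so that $\G^{\Aa_n}_1(u^n)$ controls $\iint_{|x-y|\le 1}|\tilde u^n(x)-\tilde u^n(y)|^2|x-y|^{-(d-\Aa_n)}\ud y\ud x$. Since $\{u^n\}$ is bounded in $L^2(\Omega)$ by \eqref{boundcomp}, up to a subsequence $u^n\weakly u$ in $L^2(\Omega)$ for some $u\in L^2(\Omega)$; the real work is to upgrade this to strong convergence and to show $u\in H^0_0(\Omega)$, i.e. $\G^0_1(u)<+\infty$.

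The key quantitative step is a translation estimate: for $0<|h|\le 1$ and any $n$,
\begin{equation*}
\int_{\R^d}|\tilde u^n(x+h)-\tilde u^n(x)|^2\ud x \le \omega(|h|)\,,
\end{equation*}
where $\omega$ is a modulus of continuity independent of $n$. To get this I would fix a small radius, average the difference $\tilde u^n(x+h)-\tilde u^n(x)$ over $y$ in a ball of radius comparable to $|h|$ around $x$ (or use the standard ``difference quotient vs. Gagliardo'' trick), apply Cauchy--Schwarz, and bound the resulting double integral by $\G^{\Aa_n}_1(u^n)$ times $\sup_n \|k^{\Aa_n}_1\|_{L^1}$-type quantities. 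The point is that $|z|^{-(d-\Aa_n)}$ is integrable near the origin uniformly in $n$ once $\Aa_n$ stays bounded away from $0$ — but here $\Aa_n\to 0^+$, so $\|k^{\Aa_n}_1\|_{L^1(\R^d)} = \frac{d\omega_d}{\Aa_n}$ blows up. Hence one must be careful: the correct estimate should produce $\omega(|h|)$ of the form $C\,|h|^{\Aa_n}$ or, after optimizing, a genuine $n$-independent modulus such as $C/\log(1/|h|)$, exploiting that the factor $\Aa_n$ in front kills the divergence (this is exactly the mechanism behind the $0$-Gagliardo seminorm of \cite{CDKNP}). Once the uniform modulus is in hand, the Fréchet--Kolmogorov theorem (as already invoked in Lemma \ref{always}) gives, up to a further subsequence, $\tilde u^n\to \tilde u$ strongly in $L^2(\R^d)$, hence $u^n\to u$ in $L^2(\Omega)$.

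Finally, to see $u\in H^0_0(\Omega)$ I would use the lower semicontinuity of $\G^0_1$ along the strongly convergent sequence together with a lower bound $\liminf_n \G^{\Aa_n}_1(u^n) \ge \G^0_1(u)$; this follows by Fatou's lemma applied to the integrands $|\tilde u^n(x)-\tilde u^n(y)|^2|x-y|^{-(d-\Aa_n)}$ on $\B_1$, using $u^n\to u$ a.e. (up to subsequence) and $|x-y|^{-(d-\Aa_n)}\to |x-y|^{-d}$ pointwise on $\B_1\setminus\{x=y\}$, so that $\G^0_1(u)\le \liminf_n\G^{\Aa_n}_1(u^n)\le C<+\infty$. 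The main obstacle, as indicated, is the derivation of the $n$-uniform translation modulus in the regime $\Aa_n\to 0^+$: one cannot simply bound by $\|k^{\Aa_n}_1\|_{L^1}$, and must instead track the $|h|^{\Aa_n}$ decay (equivalently pass through the renormalized, logarithmic-type estimate) so that the bad constant $1/\Aa_n$ is compensated.
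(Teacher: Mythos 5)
Your overall strategy is essentially the one the paper uses: extract a weak limit, compare $u^n$ with an average of $u^n$ against a normalized version of the kernel, show the error is small uniformly in $n$, conclude strong $L^2$ compactness, and finish with Fatou to get $\G^0_1(u)\le\liminf_n\G^{\Aa_n}_1(u^n)\le C$ (that last step is identical to the paper's). The organizational difference is that you route the compactness through an $n$-uniform translation modulus and Fr\'echet--Kolmogorov, whereas the paper avoids translation estimates altogether: it writes $\|u^n-u\|\le\|u^n-w_\delta^{\Aa_n}\ast u^n\|+\|w_\delta^{\Aa_n}\ast u^n-w_\delta^0\ast u\|+\|w_\delta^0\ast u-u\|$, kills the middle term for fixed $\delta$ using the weak convergence $u^n\weakly u$ together with Lemma \ref{always}, and then sends $\delta\to0$. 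This is cleaner because the mollifier $w_\delta^{\Aa_n}=j_\delta^{\Aa_n}/\|j_\delta^{\Aa_n}\|_{L^1}$ is not smooth, which makes a direct translation estimate for $w\ast u^n$ awkward.

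The genuine issue is in your justification of the key quantitative step. You write that the divergence of $\|k^{\Aa_n}_1\|_{L^1}=d\omega_d/\Aa_n$ is compensated because ``the factor $\Aa_n$ in front kills the divergence'' --- but in this lemma there is no $\Aa_n$ prefactor: the hypothesis is $\G^{\Aa_n}_1(u^n)\le C$, not $\Aa_n\G^{\Aa_n}_1(u^n)\le C$. Also, a modulus of the form $C|h|^{\Aa_n}$ is useless here, since $|h|^{\Aa_n}\to1$ as $\Aa_n\to0^+$. The correct mechanism (from \cite[Lemma 2.2]{JW}, which the paper follows) is to truncate the kernel to the annulus $\{\delta<|z|<1\}$: its $L^1$ mass is $d\omega_d\frac{1-\delta^{\Aa_n}}{\Aa_n}\to d\omega_d|\log\delta|$ for fixed $\delta$ as $\Aa_n\to0^+$, and Jensen's inequality against the normalized kernel $w_\delta^{\Aa_n}$ yields
\begin{equation*}
\|u^n-w_\delta^{\Aa_n}\ast u^n\|^2_{L^2(\R^d)}\le\frac{2}{\|j_\delta^{\Aa_n}\|_{L^1(\R^d)}}\,\G^{\Aa_n}_1(u^n)\le\frac{C}{|\log\delta|}
\end{equation*}
for $n$ large. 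So the gain is the factor $1/\|j_\delta^{\Aa_n}\|_{L^1}\sim1/|\log\delta|$ coming from the normalization, not from any $\Aa_n$ prefactor. Your guess of a logarithmic modulus $C/\log(1/|h|)$ is the right order of magnitude, but as written the mechanism you invoke does not produce it; you should replace it with the annular truncation and Jensen argument above (after which either your Fr\'echet--Kolmogorov route or the paper's triangle inequality closes the proof).
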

\begin{proof}
By \eqref{boundcomp},  there exists $u\in L^2(\Omega)$ such that, up to a (not-relabeled) subsequence, $u^n\weakly u$ in $L^2(\Omega)$\,. 

For every $\Aa\in[0,d)$
and for every $0<\delta<1$\,, we define $j_\delta^{\Aa}:\R^d\to (0,+\infty)$ as
$$
j_\delta^{\Aa}(z):=
\frac{\chi_{A_{\delta,1}(0)}(z)}{|z|^{d-\Aa}}\,,
$$
where we have set
 $A_{r,R}(\xi):=B_{R}(\xi)\setminus \overline{B}_{r}(\xi)$ for every $0<r<R$ and for every $\xi\in\R^d$\,.  
Furthermore, for every $\Aa\in[0,d)$ and for every $0<\delta<1$ we set  $w_\delta^{\Aa}:=\frac{j_\delta^{\Aa}}{\|j_\delta^{\Aa}\|_{L^1(\R^d)}}$\,.

Notice that
\begin{equation}\label{diretto}
\|j_{\delta}^{\Aa_n}\|_{L^1(\R^d)}=d\omega_d\frac{1-\delta^{\Aa_n}}{\Aa_n}\quad\textrm{and}\quad
\sup_{n\in\N}\|j_\delta^{\Aa_n}\|_{L^2(\R^d)}\le\sup_{n\in\N} \frac{\omega_d^{\frac 1 2}}{\delta^{d-\Aa_n}}=\frac{\omega_d^{\frac 1 2}}{\delta^d}\,.
\end{equation}

We observe that for every $0<\delta<1$ 
\begin{equation}\label{convej}
j^{\Aa_n}_\delta\to j^{0}_\delta\textrm{ in }C^k(\overline{A_{\delta,1}(0)}) \text{ for all } k\in\N  \textrm{ and } \|j^{\Aa_n}_\delta\|_{L^1(\R^d)}\to d\omega_d|\log\delta|= \|j^{0}_\delta\|_{L^1(\R^d)}\,,
\end{equation}
and, by Lemma \ref{always},
\begin{equation}\label{claim01}
\|w^0_\delta\ast (u^n-u)\|_{L^2(\R^d)}\to 0\qquad\textrm{as }n\to +\infty.
\end{equation}
In turn, by  triangular and convolution's Young inequalities, using \eqref{convej} and \eqref{claim01}, we get
\begin{multline}\label{claim1}
\|w^{\Aa_n}_\delta\ast u^n- w^0_\delta\ast u\|_{L^2(\R^d)}\\
\le \|w^{\alpha_n}_\delta-w^0_\delta\|_{L^1(\R^d)}\|u^n\|_{L^2(\R^d)}+\|w^0_\delta\ast (u^n-u)\|_{L^2(\R^d)}\to 0\qquad\textrm{ as }n\to +\infty\,.
\end{multline}
Now, following \cite[Lemma 2.2]{JW}, using Jensen's inequality,
   we get
   \begin{equation}\label{claim2}
   \begin{aligned}
\|u^n-w_\delta^{\Aa_n}\ast u^n\|^2_{L^2(\R^d)}=&\int_{\R^d}\ud x\bigg(\int_{\R^d}(u^n(x+z)-u^n(x))w_\delta^{\alpha_n}(z)\ud z\bigg)^2\\
\le&\int_{\R^d}\ud x\int_{\R^d}|u^n(x+z)-u^n(x)|^2w_\delta^{\alpha_n}(z)\ud z\\
\le&\frac{2}{\|j^{\alpha_n}_\delta\|_{L^1(\R^d)}}\G^{\Aa_n}_1(u^n) \le \frac{C}{|\log\delta|}\,,
\end{aligned}
\end{equation}
where in the last inequality we have used \eqref{boundcomp} and \eqref{convej}. 
Moreover, since $w^{0}_{\delta}\ud z\weakstar \delta_0$ as $\delta\to 0$\,, we have that
\begin{equation}\label{claim3}
\|w^0_\delta\ast u-u\|_{L^2(\R^d)}\to 0\qquad\textrm{as }\delta\to 0\,.
\end{equation}
Therefore, by triangular inequality, we obtain that, for every $0<\delta<1$\,,
\begin{equation*}
\begin{aligned}
\|u^n-u\|_{L^2(\R^d)}\le \|u^n-w_\delta^{\Aa_n}\ast u^n\|_{L^2(\R^d)}+\|w_\delta^{\Aa_n}\ast u^n-w_\delta^0\ast u\|_{L^2(\R^d)}+\|w^0_\delta\ast u-u\|_{L^2(\R^d)}\,,
\end{aligned}
\end{equation*}
whence the strong convergence of $u^n$ to $u$ follows by and by \eqref{claim1}, \eqref{claim2}, and \eqref{claim3}, sending first $n\to +\infty$ and then $\delta\to 0$\,.
Finally, by Fatou Lemma and by \eqref{boundcomp},
\begin{equation}\label{ancheinliminf}
\G_1^0(u)\le \liminf_{n\to +\infty}\G_1^{\Aa_n}(u_n)\le C\,,
\end{equation}
whence we deduce that $u\in H^0_0(\Omega)$\,, thus concluding the proof of the lemma.
\end{proof}
\begin{proof}[Proof of Theorem \ref{thm:stozerofo}]
We start by proving compactness.
By \eqref{COMP2}, \eqref{Jshat_re} and \eqref{estJs1}, we have
\begin{equation*}
M\ge
\G^{\Aa_n}_1(u^n)+m\|u^n\|^2_{L^2(\Omega)}\,,
\end{equation*}
which, in view of Lemma \ref{convlemma},  yields (i).
Moreover, in view of \eqref{Jshat_re}, the proof of (ii) follows by \eqref{ancheinliminf} and by \eqref{DCT2}.
Finally, the proof of (iii) follows by using standard density arguments in $\Gamma$-convergence (see for instance the proof of \cite[Theorem 1.4 (iii)]{CDKNP}).
\end{proof}
\begin{remark}\label{secambiosegno2}
\rm{
Analogously to what already observed in Remark \ref{secambiosegno}, one can easily prove that the functionals $-\widehat\J^{\alpha}$ $\Gamma$-converge to $-\infty$ (as $\alpha\to 0^+$) with respect to the strong $L^2$ convergence. For example, assume that $0\in\Omega$ and consider $v_n:=\frac{n^{\frac d 2}}{\log^{\frac 1 4}n}\chi_{B_{\frac 1 n}(0)}$\,. Then, one can easily check that $v_n\to 0$ strongly in $L^2(\Omega)$\,. Moreover, for $n$ large enough,
$\G^{\alpha}_1(v_n)\ge C\frac{1-n^{-\alpha }}{\alpha}\frac{1}{\log^{\frac{1}{2}}n}$\,; since  
$$
\lim_{n\to +\infty}\lim_{\alpha\to 0}\frac{1-n^{-\alpha }}{\alpha}\frac{1}{\log^{\frac{1}{2}}n}=\lim_{n\to +\infty}\log^{\frac 1 2} n=+\infty\,,
$$
by a standard diagonal argument there exists a sequence $\{v_\alpha\}_\alpha=\{v_{n(\alpha)}\}$ with  $\|v_\alpha\|_{L^2(\Omega)}\to 0$ and such that $\G^{\alpha}_1(v_{\alpha})\to +\infty$ (as $\alpha\to 0^+$). As a consequence, for every $u\in L^2(\Omega)$\,, $u+v_{\alpha}\to u$ (strongly) in $L^2(\Omega)$ and $\G^{\alpha}_1(u+v_{\alpha})\to +\infty$\,. 
Recalling \eqref{Jshat_re} and recalling Remark \ref{DCT}, we conclude that $-\widehat\J^{\alpha}(u+v_\alpha)\to -\infty$ for every $u\in L^2(\Omega)$\,.
}
\end{remark}
\section{$\Gamma$-convergence for the functionals $\J^\Aa$ as $\Aa\to d^-$}
We define the functional $\J^{d}:L^2(\Omega)\to(-\infty,0]$ as $\J^{d}(u):=-\Big(\int_{\Omega}u(x)\ud x\Big)^2$\,.
\begin{theorem}\label{thm:sto-d2}
Let $\{\Aa_n\}_{n\in\N}\subset(0,d)$ be such that $\Aa_n\to {d}^-$ as $n\to +\infty$\,.
For every $u\in L^2(\Omega)$ and for every $\{u^n\}_{n\in\N}\subset L^2(\Omega)$ with $u^n\weakly u$ in $L^2(\Omega)$ it holds
\begin{equation}\label{nuovatesi}
\lim_{n\to +\infty} \J^{\Aa_n}(u^n)=\J^{d}(u)
\,.
\end{equation}
In particular, as $\Aa\to d^-$\,, the functionals $\pm\J^{\Aa}$  $\Gamma$-converge to the functional $\pm\J^{d}$ with respect to the weak $L^2$ convergence in $L^2(\Omega)$\,.


\end{theorem}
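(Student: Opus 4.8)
The plan is to reduce everything to the representation \eqref{riscr}, namely $\mathcal{J}^{\alpha}(v)=-\langle \tilde v,\tilde v\ast k^{\alpha}_R\rangle_{L^2(\R^d)}$ valid for any $R>\mathrm{diam}(\Omega)$, and to exploit that $d-\alpha_n\to 0^+$. Fixing once and for all such an $R$, the first step is to check that $k^{\alpha_n}_R\to\chi_{B_R(0)}$ strongly in $L^1(\R^d)$. Pointwise this is clear, since $|z|^{-(d-\alpha_n)}\to 1$ for $z\neq 0$; for the domination, note that for $|z|\ge 1$ one has $0\le 1-|z|^{-(d-\alpha_n)}\le 1$, while for $|z|<1$, as soon as $d-\alpha_n\le d/2$ (true for $n$ large), $|z|^{-(d-\alpha_n)}-1\le |z|^{-(d-\alpha_n)}\le |z|^{-d/2}$, so that $|k^{\alpha_n}_R-\chi_{B_R(0)}|$ is dominated by the fixed $L^1$ function $\chi_{B_1(0)}|z|^{-d/2}+\chi_{B_R(0)}$. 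Dominated convergence then yields $\|k^{\alpha_n}_R-\chi_{B_R(0)}\|_{L^1(\R^d)}\to 0$.

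The core of the proof is the algebraic splitting
\begin{equation*}
\mathcal{J}^{\alpha_n}(u^n)-\mathcal{J}^{d}(u)=-\langle \tilde u^n,\tilde u^n\ast(k^{\alpha_n}_R-\chi_{B_R(0)})\rangle_{L^2(\R^d)}-\Big(\int_\Omega u^n\Big)^2+\Big(\int_\Omega u\Big)^2\,,
\end{equation*}
which rests on the observation that, since $|x-y|<R$ for all $x,y\in\Omega$, one has $\langle \tilde u^n,\tilde u^n\ast\chi_{B_R(0)}\rangle_{L^2(\R^d)}=\iint_{\Omega\times\Omega}u^n(x)u^n(y)\ud y\ud x=\big(\int_\Omega u^n\big)^2$, and the same identity with $u$ in place of $u^n$ gives precisely $-\mathcal{J}^{d}(u)=\big(\int_\Omega u\big)^2$. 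To conclude I would estimate the two pieces separately. For the convolution term, Hölder's inequality together with Young's convolution inequality bounds its absolute value by $\|k^{\alpha_n}_R-\chi_{B_R(0)}\|_{L^1(\R^d)}\,\|u^n\|^2_{L^2(\Omega)}$; the norms $\|u^n\|_{L^2(\Omega)}$ are bounded because $u^n\weakly u$, so by the previous step this term vanishes. For the remaining two terms, weak convergence $u^n\weakly u$ together with $1\in L^2(\Omega)$ gives $\int_\Omega u^n\to\int_\Omega u$, hence $\big(\int_\Omega u^n\big)^2\to\big(\int_\Omega u\big)^2$. Altogether \eqref{nuovatesi} follows.

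For the ``in particular'' statement, \eqref{nuovatesi} simultaneously provides the $\Gamma$-$\liminf$ inequality (the limit exists along every weakly convergent sequence and equals $\mathcal{J}^d(u)$) and a recovery sequence (the constant sequence $u^n\equiv u$ converges weakly to $u$ and attains the value $\mathcal{J}^d(u)$), and the very same reasoning applies verbatim to $-\mathcal{J}^{\alpha_n}$; therefore $\pm\mathcal{J}^{\alpha}$ $\Gamma$-converge to $\pm\mathcal{J}^{d}$ with respect to the weak $L^2$ convergence.

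The two points requiring care are the uniform domination near the singularity of the kernels at the origin (handled above by sacrificing a bit of integrability, using $d-\alpha_n\le d/2$ eventually), and — more conceptually — the fact that, although $\mathcal{J}^{\alpha}$ is a genuinely quadratic functional and weak convergence does not in general pass to such functionals, here the limiting kernel is constant on the support of $\tilde u\otimes\tilde u$, which collapses the bilinear form to the square of the linear, weakly continuous, functional $v\mapsto\int_\Omega v$. This is precisely what makes the full convergence \eqref{nuovatesi} (and not merely a $\Gamma$-$\liminf$ bound) hold along weakly convergent sequences.
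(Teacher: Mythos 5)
Your proof is correct and follows essentially the same route as the paper's: split off the kernel difference $k^{\alpha_n}-1$ (you truncate to $B_R(0)$, which is equivalent on $\Omega\times\Omega$), kill that term via $L^1$ convergence of the kernels combined with Young's inequality and the uniform $L^2$ bound from weak convergence, and handle the remaining term by the weak continuity of $v\mapsto\int_\Omega v$. Your added details (the explicit dominating function for the kernel convergence and the remark that the limiting bilinear form collapses to the square of a weakly continuous linear functional) are accurate elaborations of steps the paper leaves implicit.
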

\begin{proof}
We first notice that for any compact set $K\subset\R^d$
\begin{equation}\label{convecar}
\lim_{n\to +\infty}\|k^{\Aa_n}- 1\|_{L^1(K)}=0\,,
\end{equation}
where $k^{\Aa_n}$ is defined in \eqref{ks}\,.
By \eqref{riscr}, we thus have that
\begin{equation}\label{spe00}
\J^{\Aa_n}(u^n)=-\langle \tilde u^n, \tilde u^n\ast(k^{\Aa_n}-1)\rangle_{L^2(\R^d)}-\langle \tilde u^n, \tilde u^n\ast 1\rangle_{L^2(\R^d)}\,.
\end{equation}
Now, taking $R>\mathrm{diam}(\Omega)$\,, by Young's convolution inequality and by \eqref{convecar}, we have
\begin{equation}\label{spe02}
\limsup_{n\to +\infty}\langle \tilde u^n, \tilde u^n\ast(k^{\Aa_n}-1)\rangle_{L^2(\R^d)}\le\limsup_{n\to +\infty}\|u^n\|^2_{L^2(\Omega)}\| k^{\Aa_n}- 1\|_{L^1(B_R(0))}=0\,,
\end{equation}
and, since $u^n\weakly u$ in $L^2(\Omega)$\,, 
\begin{equation}\label{spe03}
\lim_{n\to +\infty}\langle \tilde u^n, \tilde u^n\ast 1\rangle_{L^2(\R^d)}= \langle\tilde u,\tilde u\ast 1\rangle_{L^2(\R^d)}=-\J^{d}(u)
\,.
\end{equation}
By \eqref{spe00}, \eqref{spe02} and \eqref{spe03}, we obtain \eqref{nuovatesi}.


\end{proof}
For every $\Aa\in (0,d)$  
we define the functional $\widetilde{\J}^\Aa:L^2(\Omega)\to \R$ as
\begin{equation*}
\widetilde{\J}^\Aa(u):=\frac{\J^\Aa(u)-\J^{d}(u)}{d-\Aa}\,;
\end{equation*}
moreover, 
and for every $u\in L^2(\Omega)$ we define
\begin{equation}\label{fo-d2}
\widetilde\J^{d}(u):=-\int_{\Omega}\int_{\Omega}u(x)u(y)\tilde k^{d}(|x-y|)\ud y\ud x\,,
\end{equation}
where we have set
$\tilde k^{d}(z):=\log\frac{1}{|z|}$\,. 
Notice that 
\begin{equation}\label{ovvia}
\tilde k^{d}\in L^p_{\mathrm{loc}}(\R^d)\qquad\textrm{for any }p\in[1,+\infty)\,;
\end{equation}
therefore, for every $R>\mathrm{diam}(\Omega)$\,, by Young's convolution inequality, we get 
\begin{equation}\label{lampos}
|\widetilde\J^{d}(u)|\le \|\tilde k^{d}\|_{L^1(B_R(0))}\|u\|^2_{L^2(\Omega)}\qquad\textrm{for every }u\in L^2(\Omega)\,,
\end{equation}
which shows, in particular, that the functional $\widetilde\J^{d}$ defined by \eqref{fo-d2} takes values in $\R$\,.
\begin{remark}\label{quadrpos}
\rm{
Let $k\in L^1_{\mathrm{loc}}(\R^d)$ and let $\mathscr{J}_k$ be the functional defined on $L^2(\Omega)$  by
\begin{equation*}
\mathscr{J}_k(u):=\int_{\Omega}\int_{\Omega}u(x)u(y)k(x-y)\ud y\ud x\qquad\textrm{for every }u\in L^2(\Omega)\,.
\end{equation*} 
Let $R\ge\mathrm{diam}(\Omega)$\,.
Then, by H\"older and Young's convolution inequalities,
\begin{equation*}
|\mathscr{J}_k(u)|\le \|u\|_{L^2(\Omega)}^2\|k\|_{L^1(B_R(0))}\,.
\end{equation*}

}
\end{remark}  
\begin{remark}\label{orasi}
\rm{
Let $\{k^n\}_{n\in\N}\subset L^1_{\mathrm{loc}}(\R^d)$ with $\sup_{n\in\N}\|k^n\|_{L^1(B_R(0))}\le C$\,, for some $C>0$ and $R>\mathrm{diam}(\Omega)$\,. Then, by Remark \ref{quadrpos} the functionals $\mathscr{J}_{k^n}$ are $\lambda$-positive,  for any $\lambda>2C$\,. Therefore, for any sequence $\{u_n\}_{n\in\N}\subset L^2(\Omega)$ with 
$$
\sup_{n\in\N}\mathscr{J}_{k^n}(u^n)+\lambda \|u^n\|^2_{L^2(\Omega)}<+\infty\,,
$$
we have that, up to a subsequence, $u^n\weakly u$ in $L^2(\Omega)$\,, for some $u\in L^2(\Omega)$\,. 

In particular,  this applies to any sequence of functionals $\{\J^{\alpha_n}\}_{n}$ with $\alpha_n\to d^-$\,.
}
\end{remark}
\begin{theorem}\label{thm:sto-d2fo}
Let $\{\Aa_n\}_{n\in\N}\subset(0,d)$ be such that $\Aa_n\to d^-$ as $n\to +\infty$\,.

For every $u\in L^2(\Omega)$ and for every $\{u^n\}_{n\in\N}\subset L^2(\Omega)$ with $u^n\weakly u$ in $L^2(\Omega)$ it holds
\begin{equation}\label{nuovatesifo}
\lim_{n\to +\infty} \widetilde\J^{\Aa_n}(u^n)=\widetilde\J^{d}(u)
\,.
\end{equation}
In particular, as $\Aa\to d^-$\,, the functionals $\pm\J^{\Aa}$  $\Gamma$-converge to the functional $\pm\J^{d}$ with respect to the weak $L^2$ convergence in $L^2(\Omega)$\,.

\end{theorem}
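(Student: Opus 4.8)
The plan is to follow closely the proof of Theorem \ref{thm:sto-d2}, with the constant kernel $1$ there replaced by the logarithmic kernel $\tilde k^{d}$ and with the renormalized difference quotient of the kernels playing the role of $k^{\Aa_n}-1$. Writing $\beta_n:=d-\Aa_n\to 0^+$ and
\[
\tilde k^{\Aa_n}(z):=\frac{|z|^{-\beta_n}-1}{\beta_n}=\frac{e^{-\beta_n\log|z|}-1}{\beta_n},
\]
I would first note that, exactly as in \eqref{riscr} (using that $\tilde k^{\Aa_n}$ is radial, hence even), for every $R>\mathrm{diam}(\Omega)$ and every $u\in L^2(\Omega)$ one has $\widetilde\J^{\Aa_n}(u)=-\langle\tilde u,\tilde u\ast\tilde k^{\Aa_n}_R\rangle_{L^2(\R^d)}$, where $\tilde k^{\Aa_n}_R:=\chi_{B_R(0)}\tilde k^{\Aa_n}$. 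Two elementary pointwise facts are recorded: for every fixed $z\neq 0$ one has $\tilde k^{\Aa_n}(z)\to-\log|z|=\tilde k^{d}(z)$, and the convexity inequality $e^{x}\ge 1+x$ applied with $x=-\beta_n\log|z|$ yields the pointwise lower bound $\tilde k^{\Aa_n}(z)\ge \tilde k^{d}(z)$ for every $z\neq 0$ and every $n$.

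The only genuinely new step -- the one I expect to be the crux, though it is rather mild -- is to upgrade this to strong $L^1$ convergence of the truncated kernels, $\tilde k^{\Aa_n}_R\to\tilde k^{d}_R$ in $L^1(\R^d)$, where $\tilde k^{d}_R:=\chi_{B_R(0)}\tilde k^{d}\in L^1(\R^d)\cap L^2(\R^d)$ by \eqref{ovvia}. Thanks to the pointwise lower bound, the difference $\tilde k^{\Aa_n}_R-\tilde k^{d}_R$ is non-negative, so $\|\tilde k^{\Aa_n}_R-\tilde k^{d}_R\|_{L^1(\R^d)}=\int_{B_R(0)}\tilde k^{\Aa_n}\ud z-\int_{B_R(0)}\tilde k^{d}\ud z$; here the first integral equals $\frac{1}{\beta_n}\bigl(\frac{d\omega_d}{d-\beta_n}R^{d-\beta_n}-\omega_d R^{d}\bigr)$ (a computation in the spirit of \eqref{l1ksR}), which tends as $\beta_n\to 0^+$ to $\omega_d R^{d}\bigl(\tfrac1d-\log R\bigr)=\int_{B_R(0)}\tilde k^{d}\ud z$. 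Hence $\|\tilde k^{\Aa_n}_R-\tilde k^{d}_R\|_{L^1(\R^d)}\to 0$. (Alternatively one may invoke Scheff\'e's lemma, combining the a.e.\ pointwise convergence with the convergence of the integrals.)

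With this in hand the conclusion is a routine weak--strong argument. Given $u^n\weakly u$ in $L^2(\Omega)$, so that $\sup_n\|u^n\|_{L^2(\Omega)}<+\infty$, I would split
\[
\langle\tilde u^n,\tilde u^n\ast\tilde k^{\Aa_n}_R\rangle_{L^2(\R^d)}=\langle\tilde u^n,\tilde u^n\ast(\tilde k^{\Aa_n}_R-\tilde k^{d}_R)\rangle_{L^2(\R^d)}+\langle\tilde u^n,\tilde u^n\ast\tilde k^{d}_R\rangle_{L^2(\R^d)}.
\]
The first term is $O\bigl(\|u^n\|_{L^2(\Omega)}^2\,\|\tilde k^{\Aa_n}_R-\tilde k^{d}_R\|_{L^1(\R^d)}\bigr)\to 0$ by Young's convolution inequality and the previous step. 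For the second term, $\tilde k^{d}_R\in L^2(\R^d)$ with compact support and all the relevant functions are supported in a fixed ball, so Lemma \ref{always} gives $\tilde u^n\ast\tilde k^{d}_R\to\tilde u\ast\tilde k^{d}_R$ strongly in $L^2(\R^d)$; combined with $\tilde u^n\weakly\tilde u$ this yields $\langle\tilde u^n,\tilde u^n\ast\tilde k^{d}_R\rangle_{L^2(\R^d)}\to\langle\tilde u,\tilde u\ast\tilde k^{d}_R\rangle_{L^2(\R^d)}=-\widetilde\J^{d}(u)$, exactly as in \eqref{spe00}--\eqref{spe03}. Therefore $\widetilde\J^{\Aa_n}(u^n)\to\widetilde\J^{d}(u)$, which is \eqref{nuovatesifo}. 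Finally, the $\Gamma$-convergence of $\pm\widetilde\J^{\Aa}$ to $\pm\widetilde\J^{d}$ with respect to the weak $L^2$ convergence follows at once: \eqref{nuovatesifo} provides the $\Gamma$-$\liminf$ inequality (the limit exists and equals $\pm\widetilde\J^{d}(u)$), while the constant sequence $u^n\equiv u$ is a recovery sequence.
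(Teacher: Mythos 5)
Your proposal is correct and follows essentially the same route as the paper: the same decomposition $\widetilde\J^{\Aa_n}(u^n)=-\langle\tilde u^n,\tilde u^n\ast(\tilde k^{\Aa_n}-\tilde k^{d})\rangle_{L^2(\R^d)}+\widetilde\J^{d}(u^n)$, with the first term killed by Young's convolution inequality and $L^1_{\mathrm{loc}}$ convergence of the kernels, and the second handled by Lemma \ref{always}. The only (harmless) divergence is in how the kernel convergence is justified: the paper uses the Taylor expansion $\tilde k^{\Aa_n}=\tilde k^{d}+\mathrm{O}\bigl((d-\Aa_n)\log^2\tfrac{1}{|z|}\bigr)$ to get $L^p_{\mathrm{loc}}$ convergence for all $p$, whereas you use the monotonicity bound $\tilde k^{\Aa_n}\ge\tilde k^{d}$ together with convergence of the integrals (Scheff\'e), which yields the $L^1$ convergence that is all the argument actually needs.
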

\begin{proof}
For any $n\in\N$ we set $\tilde k^{\Aa_n}:=\frac{k^{\Aa_n}-1}{d-\Aa_n}$\,, with $k^{\Aa_n}$ defined in \eqref{ks}.
For any $v\in L^2(\Omega)$\,, it holds
\begin{equation}\label{riscr4}
\widetilde\J^{\Aa_n}(v)=-\langle \tilde v,\tilde v\ast \tilde k^{\Aa_n}\rangle_{L^2(\R^d)}\,,\qquad
\widetilde\J^{d}(v)=-\langle \tilde v,\tilde v\ast \tilde k^{d}\rangle_{L^2(\R^d)}\,,
\end{equation}
and hence
\begin{equation}\label{lausodopo}
\widetilde\J^{\Aa_n}(v)=-\langle \tilde v,\tilde v\ast (\tilde k^{\Aa_n}-\tilde k^{d}) \rangle_{L^2(\R^d)}
+\widetilde\J^{d}(v)\\
\end{equation}
Moreover, by Taylor expansion, for any $z\in\R^d\setminus\{0\}$\,, we have 
\begin{equation}\label{tay}
\tilde k^{\Aa_n}(z)=\tilde k^{d}(z)+\mathrm{O}\Big((d-\Aa_n)\log^2\frac{1}{|z|}\Big)\,,
\end{equation}
with $\lim_{t\to 0}\frac{\mathrm{O}(t)}{t}\in\R$\,. Therefore, for any compact set $K\subset\R^d$\,, it holds
\begin{equation}\label{convespe}
\lim_{n\to +\infty}\|\tilde k^{\Aa_n}-\tilde k^{d}\|_{L^p(K)}= 0\qquad\textrm{for any }p\in[1,+\infty)\,.
\end{equation}
Let $R>\mathrm{diam}(\Omega)$\,. 

We start by proving (a).
By Young's convolution inequality and by \eqref{convespe}, we have
\begin{equation}\label{spe2}
\limsup_{n\to +\infty}\langle \tilde u^n, \tilde u^n\ast(\tilde k^{\Aa_n}-\tilde k^{d})\rangle_{L^2(\R^d)}\le\limsup_{n\to +\infty}\|u^n\|^2_{L^2(\Omega)}\|\tilde k^{\Aa_n}-\tilde k^{d}\|_{L^1(B_R(0))}=0\,,
\end{equation}
and, by Lemma \ref{always}, 
\begin{equation}\label{spe3}
\lim_{n\to +\infty}\widetilde\J^d( u^n)=\widetilde\J^d(u)\,.
\end{equation}
By \eqref{lausodopo}, \eqref{spe2}, and \eqref{spe3}, we thus obtain
\begin{equation*}
\lim_{n\to +\infty}\widetilde\J^{\Aa_n}(u^n)=\widetilde\J^d(u)\,,
\end{equation*}
i.e., \eqref{nuovatesifo}.

\end{proof}
\begin{remark}\label{comptilded}
\rm{
By Remark \ref{orasi} and by \eqref{lausodopo}, \eqref{convespe} and \eqref{lampos},  it immediately follows that any sequence $\{\widetilde\J^{\alpha_n}\}_{n}$ with $\alpha_n\to d^{-}$ is uniformly $\lambda$-positive and hence  weakly equi-coercive in $L^2(\Omega)$ (in the sense of Remark \ref{orasi}).
}
\end{remark}

\section{Convergence of the $\Aa$-Riesz flows}
\subsection{The $\Aa$-Riesz potential for $\Aa\in (0,d)$ and for $\Aa=0$ and $\Aa=d$}
For every $\Aa\in(0,d)$ and for every $v\in L^2(\R^d)$ the $\Aa$-Riesz potential is formally defined by 
\begin{equation*}
(-\Delta)^{-\frac{\Aa}{2}}v(x):=2\int_{\R^d}\frac{v(y)}{|x-y|^{d-\Aa}}\ud y\,,\qquad\qquad\textrm{for a.e. } x\in\R^d\,.
\end{equation*}
Notice that such a definition coincides, up to  multiplicative constants depending on $\alpha$ and $d$\,, with  \cite[formula (4), p. 117]{Stein}.
By \cite[Theorem 1, p. 119]{Stein} the above definition is well posed, and there exists a constant $C(d,\Aa)$ such that
\begin{equation*}
\|(-\Delta)^{-\frac{\Aa}{2}} v\|_{L^q(\R^d)}\le C(d,\Aa)\|v\|_{L^2(\R^d)}\qquad\qquad\textrm{for all }v\in L^2(\R^d)\,,
\end{equation*}
where $q=\frac{2d}{d-\Aa}$. Since $q\ge 2$, by H\"older inequality, we deduce that 
\begin{equation}\label{lapl2}
\|(-\Delta)^{-\frac \Aa 2} v\|_{L^2(\Omega)}\le C(d,\Aa,\Omega)\|v\|_{L^2(\R^d)};
\end{equation} we also notice that, if $v\in L^2(\Omega)$ with $v\equiv 0 $ on $\R^d\setminus \Omega$, then  \eqref{lapl2}  is an easy consequence of convolution's Young inequality. 
With a little abuse of notation, for every $\Aa\in (0,d)$ and for every
 $u\in L^2(\Omega)$ we set $(-\Delta)^{-\frac \Aa 2}u:=(-\Delta)^{-\frac \Aa 2}\tilde u$\,.

One can easily check that the $\Aa$-Riesz potential is nothing but the first variation of the $\Aa$-Riesz functionals $-\J^{\Aa}$, i.e., for every $u,\ffi\in L^2(\Omega)$ we have
\begin{equation}\label{eq:firstvars}
\lim_{t\to 0}\frac{\J^\Aa(u+t\ffi)-\J^\Aa(u)}{t}=-\langle(-\Delta)^{-\frac \Aa 2} u,\ffi\rangle_{L^2(\Omega)}.
\end{equation}
In analogy with \cite[Subsection 4.1]{CDKNP},
for every $\psi\in\CC^{\infty}_{\cc}(\R^d)$ we define the $0$-fractional laplacian of $\psi$ as
\begin{equation*}
(-\Delta)^0\psi(x):=\int_{B_1}\frac{2\psi(x)-\psi(x+z)-\psi(x-z)}{|z|^d}\ud z-2\int_{\R^d\setminus \overline{B}_1}\frac{\psi(x+z)}{|z|^d}\ud z\,,\quad x\in\R^d \, .
\end{equation*}
Furthermore, for every $u\in H^0_0(\Omega)$
we define $0$-fractional laplacian of $u$ by duality as 
\begin{equation}\label{ext0lap}
\langle (-\Delta)^0u,\ffi\rangle :=\langle u,(-\Delta)^0\tilde\ffi\rangle_{L^2(\Omega)}\,,\qquad\textrm{for all }\ffi\in\CC^{\infty}_{\cc}(\Omega)\,.
\end{equation}
Clearly, the $0$-fractional laplacian is the first variation of the functional $\widehat \J^0$\,, as shown in \cite[Proposition 3.2]{CDKNP}.
      
Finally, for every $v\in L^2(\Omega)$ we set 
\begin{equation*}
(-\Delta)^{-\frac d 2}v(x):=2\int_{\Omega}v(y)\log\frac{1}{|x-y|}\ud y\,,\qquad x\in\R^d\,,
\end{equation*}
and we notice that $(-\Delta)^{-\frac d 2}: L^2(\Omega)\to \mathrm{C}(\R^d)$\,.

It is easy to see that $(-\Delta)^{-\frac d 2}$ is the first variation of the functional $-\widetilde\J^{d}$ defined in \eqref{fo-d2}\,, i.e., that for any $u,\ffi\in L^2(\Omega)$
\begin{equation*}
\lim_{t\to 0}\frac{\widetilde\J^{d}(u+t\ffi)-\widetilde\J^{d}(u)}{t}=-\langle(-\Delta)^{-\frac d 2}u,\ffi\rangle_{L^2(\Omega)}.
\end{equation*}

\subsection{Abstract stability results for parabolic flows}\label{astrrisu}
It is well known that, under suitable uniform convexity assumptions, $\Gamma$-convergence commutes with gradient flows. This fact, first exploited in \cite{Bre}, has been generalized in many respects, such as for instance in metric spaces \cite{DeGMT, AGS, SS}. For  the specific framework of  uniform $\lambda$-convex functionals we refer for instance to \cite{NSV, O1, O2, CDKNP}. Here, we recall the stability result as appeared in \cite[Theorem 3.8]{CDKNP},
that best suits our purposes. 
We start by recalling the notion of $\lambda$-positive and $\lambda$-convex functions.
Let $\h$ be a Hilbert space and let $\lambda>0$\,. We say that a function $\F: \h\rightarrow (-\infty, +\infty]$ is $\lambda$-convex if the function $\F(\cdot)+ \frac{\lambda}{2} \vert \cdot \vert_\h^2$ is convex.
Moreover, we say that $\F$ is $\lambda$-positive if 
$\F(x)+ \frac{\lambda}{2} \vert x \vert_\h^2 \geq 0$ for every $x \in\h$\,.

\begin{theorem}\label{genstab}
Let $\h$ be a Hilbert space. Let $\{\F^n\}_{n\in\N}$ with $\F^n:\h\to(-\infty,+\infty]$ for every $n\in\N$ be a sequence of proper, strongly lower semicontinuous functions which are $\lambda$-convex and $\lambda$-positive, for some $\lambda>0$ independent of $n$\,. Let $\{x_0^n\}_{n\in\N}\subset\h$ be such that $\sup_{n\in\N}\F^n(x_0^n)<+\infty$ and $x_0^n\to x^\infty_0$
for some $x^\infty_0\in\h$\,. Assume that one of the following statements is satisfied:
\begin{itemize}
\item[(a)] 
The functions $\F^n$ $\Gamma$-converge to some proper function $\F^\infty$ with respect to the weak $\h$-convergence. 
 Moreover, 
 the $\Gamma$-limsup inequality is satisfied with respect to the strong $\h$-convergence, i.e., for every $y\in\h$ there exists a sequence $\{y^n\}_{n\in\N}$ with $y^n\overset{\h}{\to} y$ such that $\F^n(y^n)\to \F^{\infty}(y)$ as $n\to +\infty$\,.
  
\item[(b)] The functions $\F^n$ $\Gamma$-converge to some proper function $\F^\infty$ with respect to the strong $\h$-convergence (as $n\to +\infty$) and every sequence $\{y^n\}_{n\in\N}\subset\h$ with
\[
\sup_{n\in\N}\F^n(y^n)+\frac{\lambda}{2}|y^n|_\h^2<+\infty
\]
admits a strongly convergent subsequence.
\end{itemize}
Let $T>0$\,.
Then, $\F^{\infty}(x_0^\infty) <+\infty$ and the unique solutions $x^n \in H^1([0,T];\H)$ of the Cauchy problem
\begin{equation}\label{caun}
\begin{cases}
 \dot{x}(t)\in -\partial \F^n(x(t))  \qquad \text{for a.e.\ }t \in (0,T)\,, \\
x(0)=x_0^n
\end{cases}
\end{equation}  
weakly converge, as $n\to+\infty$\,, in $H^1([0,T];\h)$ to the unique solution $x^\infty$ of the problem
\begin{equation}\label{cauninf}
\begin{cases}
 \dot{x}(t)\in -\partial \F^\infty(x(t)) \qquad \text{for a.e.\ }t \in (0,T)\,, \\
x(0)=x^\infty_0\,.
\end{cases}
\end{equation}  
Furthermore, if 
\begin{equation}\label{reco0}
\lim_{n\to +\infty} \F^n(x^n_0)=\F^\infty(x^\infty_0)\,,
\end{equation} 
then,  we have that 
\begin{equation}\label{strongc}
x^n\to x^\infty\qquad\textrm{(strongly) in }H^1([0,T];\h)\,,
\end{equation}
\begin{equation}\label{alwaysrs}
x^n(t)\overset{\h}{\to}x^{\infty}(t)\quad\textrm{and}\quad\F^n(x^n(t))\to\F^{\infty}(x^{\infty}(t))\qquad\textrm{for every }t\in[0,T]\,. 
\end{equation}
\end{theorem}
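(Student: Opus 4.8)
The plan is to reduce Theorem \ref{genstab} to the abstract result \cite[Theorem 3.8]{CDKNP}, which is essentially the same statement; the only work is to verify that the hypotheses match up correctly, so the proof is a verification plus a standard gradient-flow wellposedness argument. First I would recall that under the standing assumptions each $\F^n$ is proper, strongly lower semicontinuous, $\lambda$-convex and $\lambda$-positive with $\lambda>0$ independent of $n$; in particular $\F^n(\cdot)+\frac{\lambda}{2}|\cdot|_\h^2$ is a proper, convex, lower semicontinuous functional bounded below by $0$. By the classical Brezis--Komura theory of maximal monotone operators (or by the metric gradient-flow theory in \cite{AGS}), the Cauchy problem \eqref{caun} admits a unique solution $x^n\in H^1([0,T];\h)$ for every initial datum $x_0^n$ in the domain of $\F^n$; the hypothesis $\sup_n\F^n(x_0^n)<+\infty$ guarantees $x_0^n$ is in this domain. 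The same applies to \eqref{cauninf} once we know $\F^\infty$ is proper and $\lambda$-convex, $\lambda$-positive, and $x_0^\infty$ is in its domain; the latter fact, $\F^\infty(x_0^\infty)<+\infty$, follows from the $\Gamma$-liminf inequality applied along the recovery sequence for $x_0^\infty$ in case (a), or directly from $\Gamma$-convergence plus the assumed convergence $x_0^n\to x_0^\infty$ together with $\sup_n\F^n(x_0^n)<+\infty$ in case (b). That $\F^\infty$ inherits $\lambda$-convexity and $\lambda$-positivity from the $\F^n$ is a standard and elementary consequence of $\Gamma$-convergence: $\lambda$-positivity passes to the $\Gamma$-liminf, and $\lambda$-convexity of the $\Gamma$-limit of uniformly $\lambda$-convex functionals is classical (see, e.g., \cite{CDKNP}).

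Next I would establish the a priori bounds needed for compactness in $H^1([0,T];\h)$. Testing the equation $\dot x^n(t)\in-\partial\F^n(x^n(t))$ against $\dot x^n(t)$ and using the chain rule for $\lambda$-convex functionals gives the energy identity $\int_0^T|\dot x^n(t)|_\h^2\,dt + \F^n(x^n(T)) = \F^n(x_0^n)$; combined with $\lambda$-positivity, which controls $\F^n(x^n(T))$ from below by $-\frac{\lambda}{2}|x^n(T)|_\h^2$, and a Gronwall argument controlling $|x^n(t)|_\h$ in terms of $|x_0^n|_\h$ and $\sup_n\F^n(x_0^n)$, we obtain a uniform bound on $\|x^n\|_{H^1([0,T];\h)}$. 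Hence, up to a subsequence, $x^n\weakly x^\infty$ in $H^1([0,T];\h)$ and $x^n(t)\weakly x^\infty(t)$ in $\h$ pointwise in $t$. The heart of the matter is to pass to the limit in the inclusion \eqref{caun} to identify $x^\infty$ as the solution of \eqref{cauninf}. Here one uses the variational (Evolution Variational Inequality, or De Giorgi minimizing-movement) characterization of $\lambda$-gradient flows: $x^n$ is the unique curve satisfying, for a.e.\ $t$ and all $y\in\h$, the EVI $\frac12\frac{d}{dt}|x^n(t)-y|_\h^2 + \frac{\lambda}{2}|x^n(t)-y|_\h^2 \le \F^n(y)-\F^n(x^n(t))$. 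Fixing $y$, choosing the recovery sequence $y^n\to y$ (strongly, as provided by hypothesis (a), or by strong $\Gamma$-convergence in (b)), and passing to the limit using weak lower semicontinuity of the left-hand side and the $\Gamma$-liminf inequality on the right, one arrives at the EVI for $\F^\infty$, which characterizes $x^\infty$ as the unique solution of \eqref{cauninf}. Uniqueness of the limit then upgrades subsequential convergence to full convergence.

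The main obstacle, and the place where the two cases (a) and (b) genuinely differ, is controlling the term $\F^n(x^n(t))$ when passing to the limit and in obtaining the strong conclusions \eqref{strongc}--\eqref{alwaysrs}. In case (b) the equi-coercivity hypothesis directly yields strong compactness of $x^n(t)$, and then strong $\Gamma$-convergence gives $\F^n(x^n(t))\to\F^\infty(x^\infty(t))$; in case (a) one instead exploits the strong form of the $\Gamma$-limsup to build a competitor, derives a matching upper bound $\limsup_n\F^n(x^n(T))\le\F^\infty(x^\infty(T))$ from the energy identity, and combines it with the $\Gamma$-liminf lower bound to force $\F^n(x^n(T))\to\F^\infty(x^\infty(T))$ and, via the energy identities, $\int_0^T|\dot x^n|_\h^2\to\int_0^T|\dot x^\infty|_\h^2$, which together with weak convergence gives strong $H^1$ convergence. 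Finally, if additionally the well-preparedness condition \eqref{reco0} holds, the same energy-identity comparison, now anchored at $t=0$ with no loss, propagates to every $t\in[0,T]$ and yields \eqref{strongc} and \eqref{alwaysrs}. Since all of these steps are exactly those carried out in \cite[Theorem 3.8]{CDKNP}, the cleanest route is to phrase the proof as: the hypotheses of the present statement are precisely those of \cite[Theorem 3.8]{CDKNP} (after the elementary verifications above that $\F^\infty$ is $\lambda$-convex, $\lambda$-positive and proper with $x_0^\infty$ in its domain), and invoke that theorem verbatim.
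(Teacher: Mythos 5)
Your proposal is correct and matches the paper's treatment: the paper does not prove Theorem \ref{genstab} at all, but simply recalls it as \cite[Theorem 3.8]{CDKNP}, which is exactly the reduction you propose (your additional sketch of the EVI/energy-identity machinery is a faithful outline of how that cited proof goes, but it is not reproduced in this paper).
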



\subsection{Convergence of the $\Aa$-parabolic flows as $\Aa\to 0^+$}
Here we state and prove the  convergence results for the parabolic flows corresponding to the (either scaled or renormalized) $\Aa$-Riesz functionals as $\Aa\to 0^+$\,.  These results follow by applying our $\Gamma$-convergence analysis together with  the stability result Theorem \ref{genstab}.
 
\begin{theorem}\label{convheat0ordsto0}
Let $\{\Aa_n\}_{n\in\N}\subset (0,d)$ be such that $\Aa_n\to 0^+$ as $n\to +\infty$\,. Let $u^\infty_0\in L^2(\Omega)$ and $\{u_0^n\}_{n\in\N}\subset L^2(\Omega)$ be such that 
 $u^n_0\to u^\infty_0$ in $L^2(\Omega)$\,.
Let $T>0$; then,
for every $n\in\N$ there exists a unique  solution $u^n\in H^1([0,T]; L^2(\Omega))$ of
\begin{equation}\label{cauchyord0n}
\begin{dcases}
 u_t(t) =-\Aa_n(-\Delta)^{-\frac{\Aa_n}{2}}u(t) \qquad\textrm{for a.e.\ }t\in(0,T)\\
u(0)=u^{n}_0\,.
\end{dcases}
\end{equation}
Moreover,  $u^n\to u^\infty$ in $H^1([0,T];L^2(\Omega))$ as $n\to +\infty$\,, where $u^\infty$ 
 is the unique  solution of
\begin{equation}\label{cauchyord0infty}
\begin{dcases}
 u_t(t) =-2d\omega_d u(t)\qquad\textrm{for a.e.\ }t\in (0,T)\,,\\
u(0)=u^\infty_0\,,
\end{dcases}
\end{equation}
and
$$
\|u^n(t)-u^\infty(t)\|_{L^2(\Omega)}\to 0\quad\textrm{and}\quad -\Aa_n\J^{\Aa_n}(u^n(t))\to d\omega_d\|u^\infty(t)\|^2_{L^2(\Omega)}\qquad\textrm{for every }t\in [0,T] \,.
$$
\end{theorem}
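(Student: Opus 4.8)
The plan is to derive the statement from the abstract stability result Theorem~\ref{genstab}, applied in the Hilbert space $\h=L^2(\Omega)$ to the functionals $\F^n:=-\Aa_n\J^{\Aa_n}$ with candidate limit $\F^\infty:=d\omega_d\|\cdot\|^2_{L^2(\Omega)}$, checking that hypothesis (a) of that theorem holds. First I would verify the structural assumptions on the $\F^n$. Each $\F^n$ is real-valued by \eqref{welldefJs}, hence proper, and continuous on $L^2(\Omega)$ by \eqref{continJs}, hence strongly lower semicontinuous. Writing, as in \eqref{riscr}, $\F^n(u)=\Aa_n\langle\tilde u,\tilde u\ast k^{\Aa_n}_R\rangle_{L^2(\R^d)}$ for a fixed $R>\mathrm{diam}(\Omega)$, the positive-definiteness property \eqref{posdef} shows that $\F^n$ is a nonnegative quadratic form; in particular it is convex and nonnegative, so it is $\lambda$-convex and $\lambda$-positive for every $\lambda>0$, uniformly in $n$. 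Finally, since $u^n_0\to u^\infty_0$ in $L^2(\Omega)$, estimate \eqref{welldefJs} gives $\sup_{n}\F^n(u^n_0)\le \sup_n d\omega_d R^{\Aa_n}\|u^n_0\|^2_{L^2(\Omega)}<+\infty$.

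Next I would check hypothesis (a) of Theorem~\ref{genstab}. The $\Gamma$-liminf inequality with respect to the weak $L^2(\Omega)$-topology is exactly Theorem~\ref{gammafs}(i); and Theorem~\ref{gammafs}(ii), applied to the constant sequence $u^n\equiv u$, provides a recovery sequence that converges strongly (hence a fortiori weakly) to $u$ with $\F^n(u)\to\F^\infty(u)$. Thus $\F^n$ $\Gamma$-converges to $\F^\infty$ with respect to weak $L^2$-convergence, and the $\Gamma$-limsup inequality holds along strongly convergent sequences, which is precisely (a). Then I would identify the abstract evolution problems with the concrete ones: since $\F^n$ is a continuous convex quadratic functional, its subdifferential is single-valued and equals its G\^ateaux derivative, which by \eqref{eq:firstvars} is $\Aa_n(-\Delta)^{-\frac{\Aa_n}{2}}u$, so \eqref{caun} becomes \eqref{cauchyord0n}; similarly $\partial\F^\infty(u)=\{2d\omega_d u\}$, so \eqref{cauninf} becomes \eqref{cauchyord0infty}. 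At this point Theorem~\ref{genstab} delivers existence and uniqueness of $u^n\in H^1([0,T];L^2(\Omega))$ solving \eqref{cauchyord0n}, the weak $H^1([0,T];L^2(\Omega))$-convergence of $u^n$ to the unique solution $u^\infty$ of \eqref{cauchyord0infty}, and $\F^\infty(u^\infty_0)<+\infty$.

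To upgrade to strong $H^1$-convergence and to the pointwise convergences, I would verify the well-preparedness condition \eqref{reco0}, namely $-\Aa_n\J^{\Aa_n}(u^n_0)\to d\omega_d\|u^\infty_0\|^2_{L^2(\Omega)}$. This is exactly the strong-convergence computation performed inside the proof of Theorem~\ref{gammafs} (and recorded in Remark~\ref{secambiosegno}): decomposing $-\Aa_n\J^{\Aa_n}(u^n_0)$ via \eqref{riscr2} with the reference function $u^\infty_0$, the diagonal term is controlled by $d\omega_d R^{\Aa_n}\|u^n_0-u^\infty_0\|^2_{L^2(\Omega)}\to 0$ since $\|\bar k^{\Aa_n}_R\|_{L^1(\R^d)}=d\omega_d R^{\Aa_n}$, while the two remaining terms converge respectively to $0$ and to $d\omega_d\|u^\infty_0\|^2_{L^2(\Omega)}$ by \eqref{16feb} and \eqref{perlimsup}. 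Invoking \eqref{strongc} and \eqref{alwaysrs} then yields $u^n\to u^\infty$ strongly in $H^1([0,T];L^2(\Omega))$, together with $\|u^n(t)-u^\infty(t)\|_{L^2(\Omega)}\to 0$ and $-\Aa_n\J^{\Aa_n}(u^n(t))\to d\omega_d\|u^\infty(t)\|^2_{L^2(\Omega)}$ for every $t\in[0,T]$, which is the assertion.

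I expect the only point requiring some care, rather than pure bookkeeping, to be the identification of the subdifferential $\partial\F^n$ with the bounded linear operator $\Aa_n(-\Delta)^{-\frac{\Aa_n}{2}}$ (and of $\partial\F^\infty$ with $2d\omega_d\,\mathrm{Id}$): once one observes that $\F^n$ and $\F^\infty$ are continuous nonnegative quadratic forms, this follows from \eqref{eq:firstvars} together with the fact that for continuous convex functionals the subdifferential reduces to the G\^ateaux differential. All the genuinely analytic difficulty — well-posedness and $\Gamma$-stability of $\lambda$-convex gradient flows — is already absorbed into Theorem~\ref{genstab}, so the proof reduces to matching its hypotheses to the present setting.
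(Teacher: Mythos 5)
Your proposal is correct and follows essentially the same route as the paper: it verifies the structural hypotheses ($\lambda$-convexity, $\lambda$-positivity, strong lower semicontinuity), checks assumption (a) of Theorem~\ref{genstab} via Theorem~\ref{gammafs}, identifies the subdifferentials through \eqref{eq:firstvars}, and obtains the well-preparedness condition \eqref{reco0} from the computation recorded in Remark~\ref{secambiosegno}. The only difference is that you spell out a few details the paper leaves implicit (the bound $\sup_n\F^n(u^n_0)<+\infty$ and the identification of the subdifferential with the G\^ateaux derivative), which is fine.
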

\begin{proof}
Recall that,
by \eqref{welldefJs}, the functionals $-\Aa_n\J^{\Aa_n}$ are finite on $L^2(\Omega)$ and, by \eqref{posdef2} and Remark \ref{quadrpos}, they are also positive and convex. 
Furthermore, by \eqref{continJs}, the functionals $-\Aa_n\J^{\Aa_n}$ are continuous with respect to the strong convergence in $L^2(\Omega)$\,. 
Moreover, for every $u,\ffi\in L^2(\Omega)$\,,
 \begin{equation}\label{banale}
 \lim_{t\to 0}\frac{d\omega_d\|u+t\ffi\|^2_{L^2(\Omega)}-d\omega_d\|u\|^2_{L^2(\Omega)}}{t}=2\langle d\omega_d u,\ffi\rangle_{L^2(\Omega)}\,.
 \end{equation}
Finally, by Remark \ref{secambiosegno}, $\lim_{n\to +\infty}-\Aa_n\J^{\Aa_n}(u^n_0)=d\omega_d\|u^\infty_0\|^2_{L^2(\Omega)}$\,.
 Hence, the conclusion follows by applying Theorem \ref{genstab} with $\H=L^2(\Omega)$\,, $\F^n = -\Aa_n\J^{\Aa_n}$ and $\F^{\infty}(\cdot)=d\omega_d\|\cdot\|^2_{L^2(\Omega)}$\,, once noticed that, in view of Theorem \ref{gammafs}, assumption (a)  is satisfied. 
\end{proof}
\begin{remark}\label{anchequestovale}
\rm{
For every $n\in\N$ let $u^n$ be the unique solution to the Cauchy problem \eqref{cauchyord0n} and let $u$ be the solution to \eqref{cauchyord0infty} with $-2d\omega_d$ replaced by $2d\omega_d$\,. Existence and uniqueness of the solutions for the problems above are consequence of the classical gradient flow theory for $\lambda$-convex functionals (see, for instance, \cite{AGS, NSV} or \cite[Theorem 3.2]{CDKNP}). It is a natural question if $u^n$ converge to $u$\,. In the geometric context of characteristic functions this is the case for the corresponding curvature flows. In the present setting, unfortunately, Theorem \ref{genstab} does not apply. In fact, on the one hand, 
compactness can be achieved only in the weak $L^2$ topology, so that assumption (b) does not hold true.
On the other hand, in view of Remark \ref{secambiosegno}, the $\Gamma$-limit of the functionals $\alpha\J^{\alpha}$ (as $\alpha\to 0^+$) is $-d\omega_d\|\cdot\|^2_{L^2}$ only with respect to the strong $L^2$ topology, so that neither assumption (a) is satisfied.
More in general, it would be interesting
 to detect natural assumptions weaker than (a) and (b)  that guarantee convergence for similar parabolic flows, 
 when the $\Gamma$-convergence property  holds true with respect to a topology that is  stronger than that for which equicoercivity can be provided.
}
\end{remark}
\begin{theorem}\label{convheat1ordsto0}
Let $\{\Aa_n\}_{n\in\N}\subset (0,d)$ be such that $\Aa_n\to 0^+$ as $n\to +\infty$\,. Let $u^\infty_0\in L^2(\Omega)$ and let $\{u^{n}_0\}_{n\in\N}\subset L^2(\Omega)$ be such that $sup_{n\in\N}\widehat{\J}^{\Aa_n}(u^n_0)<+\infty$
and
 $u^n_0\to u^\infty_0$ in $L^2(\Omega)$\,. Let $T>0$; then, 
for every $n\in\N$ there exists a unique  solution $u^n\in H^1([0,T]; L^2(\Omega))$ of
\begin{equation}\label{cauchyord1n}
\begin{dcases}
 u_t(t) =- \Big[-(-\Delta)^{-\frac{\Aa_n}{2}}u(t)+2\frac{d\omega_d}{\Aa_n}u(t) \Big] \qquad\textrm{for a.e.\ }t\in (0,T)\,,\\
u(0)=u^{n}_0\,.
\end{dcases}
\end{equation}
Moreover, $u^\infty_0\in H^0_0(\Omega)$ and   $u^n\weakly u^\infty$ in $H^1([0,T];L^2(\Omega))$ as $n\to +\infty$\,, where 
$u^\infty$ is the unique solution of
\begin{equation}\label{cauchyord1infty}
\begin{dcases}
 u_t(t) =-(-\Delta)^0 u(t)\qquad\textrm{for a.e.\ }t\in (0,T)\\
u(0)=u^\infty_0\,.
\end{dcases}
\end{equation}
Furthermore, if
$$
\lim_{n\to +\infty} \widehat{\J}^{\Aa_n}(u^n_0)=\widehat \J^0(u^\infty_0)\,,
$$
then,  $u^n\to u^\infty$ (strongly) in $H^1([0,T];L^2(\Omega))$ and
$$
\|u^n(t)-u^\infty(t)\|_{L^2(\Omega)}\to 0\quad\textrm{and}\quad \widehat{\J}^{\Aa_n}(u^n(t))\to \widehat{\J}^0(u^\infty(t))\qquad\textrm{for every } t\in [0,T]\,.
$$
\end{theorem}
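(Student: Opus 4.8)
The plan is to obtain the statement as a direct application of the abstract stability result Theorem \ref{genstab}, in the Hilbert space $\h=L^2(\Omega)$, taking $\F^n:=\widehat\J^{\Aa_n}$, $\F^\infty:=\widehat\J^0$, $x_0^n:=u_0^n$, $x_0^\infty:=u_0^\infty$, and checking that alternative (b) is in force. Three things have to be verified: the uniform $\lambda$-convexity and $\lambda$-positivity of the functionals; the strong $L^2$ $\Gamma$-convergence together with the equi-coercivity demanded by hypothesis (b); and the identification of the abstract Cauchy problems \eqref{caun}--\eqref{cauninf} with \eqref{cauchyord1n}--\eqref{cauchyord1infty}.

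For the first point I would exploit the decomposition \eqref{Jshat_re}, $\widehat\J^{\Aa_n}=\G^{\Aa_n}_1+\J^{\Aa_n}_1$. The term $\G^{\Aa_n}_1$ is half the square of a Gagliardo-type seminorm, hence a nonnegative quadratic form, in particular convex and $0$-positive. The term $\J^{\Aa_n}_1$ is a symmetric quadratic form which by \eqref{estJs1} satisfies $|\J^{\Aa_n}_1(u)|\le|\Omega|\,\|u\|^2_{L^2(\Omega)}$, so $\J^{\Aa_n}_1(\cdot)+|\Omega|\,\|\cdot\|^2_{L^2(\Omega)}$ is a nonnegative quadratic form, and therefore convex. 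Adding up, $\widehat\J^{\Aa_n}(\cdot)+|\Omega|\,\|\cdot\|^2_{L^2(\Omega)}$ is convex and nonnegative; hence, choosing $\lambda:=2|\Omega|+2$, all the functionals $\widehat\J^{\Aa_n}$ are $\lambda$-convex and $\lambda$-positive, uniformly in $n$, and the same holds for $\widehat\J^0$ by \eqref{J0hat} and \eqref{estJs1} with $\Aa=0$. Moreover the $\widehat\J^{\Aa_n}$ are strongly continuous on $L^2(\Omega)$ and $\widehat\J^0$ is strongly lower semicontinuous (Remark \ref{DCT}), and all of them are proper since $\CC^\infty_{\cc}(\Omega)\subset H^0_0(\Omega)$.

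Next, the strong $L^2$ $\Gamma$-convergence $\widehat\J^{\Aa_n}\to\widehat\J^0$ is exactly Theorem \ref{thm:stozerofo}(ii)--(iii), and the equi-coercivity required by hypothesis (b) of Theorem \ref{genstab} — every sequence $\{y^n\}$ with $\sup_n\widehat\J^{\Aa_n}(y^n)+\tfrac\lambda2\|y^n\|^2_{L^2(\Omega)}<+\infty$ has a strongly convergent subsequence — is Theorem \ref{thm:stozerofo}(i), since $\tfrac\lambda2=|\Omega|+1$ is of the form $|\Omega|+m$ with $m=1>0$. Together with the hypotheses $\sup_n\widehat\J^{\Aa_n}(u_0^n)<+\infty$ and $u_0^n\to u_0^\infty$ in $L^2(\Omega)$, Theorem \ref{genstab} then applies and, among other things, gives $\widehat\J^0(u_0^\infty)<+\infty$; since $\J^0_1$ is finite on all of $L^2(\Omega)$ by \eqref{estJs1}, this forces $\G^0_1(u_0^\infty)<+\infty$, i.e. $u_0^\infty\in H^0_0(\Omega)$. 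It then remains to match the flows. Each $\widehat\J^{\Aa_n}$ is a $\lambda$-convex, everywhere Gâteaux-differentiable quadratic functional, so its subdifferential is single valued and coincides with its gradient, which by \eqref{defJshat}, \eqref{eq:firstvars} and \eqref{banale} is $u\mapsto-(-\Delta)^{-\frac{\Aa_n}{2}}u+2\frac{d\omega_d}{\Aa_n}u$; hence \eqref{caun} with $\F^n=\widehat\J^{\Aa_n}$ is precisely \eqref{cauchyord1n}. On the other side, $\widehat\J^0$ is proper, lower semicontinuous and $\lambda$-convex, and by \cite[Proposition 3.2]{CDKNP} its first variation is the $0$-fractional Laplacian \eqref{ext0lap}, so $\partial\widehat\J^0$ is the maximal monotone realization of $(-\Delta)^0$ and \eqref{cauninf} with $\F^\infty=\widehat\J^0$ is \eqref{cauchyord1infty}. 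Theorem \ref{genstab}(b) then yields existence and uniqueness of $u^n$ and $u^\infty$ and the weak $H^1([0,T];L^2(\Omega))$ convergence $u^n\weakly u^\infty$; and if in addition $\widehat\J^{\Aa_n}(u_0^n)\to\widehat\J^0(u_0^\infty)$, then \eqref{reco0} holds and the last part of Theorem \ref{genstab} upgrades this to strong convergence in $H^1([0,T];L^2(\Omega))$ and to \eqref{alwaysrs}, which is the pointwise-in-time statement.

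The main obstacle I expect, beyond the bookkeeping of constants in the uniform $\lambda$-estimates, is the rigorous identification of $\partial\widehat\J^0$ with the operator $(-\Delta)^0$ on its natural domain $H^0_0(\Omega)$, so that the abstract limiting flow \eqref{cauninf} is genuinely the $0$-fractional heat equation \eqref{cauchyord1infty}; this is precisely where the analysis of the $0$-fractional Laplacian developed in \cite{CDKNP} together with the duality definition \eqref{ext0lap} is indispensable.
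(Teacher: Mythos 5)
Your proposal is correct and follows essentially the same route as the paper: both apply Theorem \ref{genstab} under alternative (b) with $\F^n=\widehat\J^{\Aa_n}$, $\F^\infty=\widehat\J^0$, using the decomposition \eqref{Jshat_re} with \eqref{estJs1} for the uniform $\lambda$-convexity/positivity, Remark \ref{DCT} for the semicontinuity, Theorem \ref{thm:stozerofo} for the $\Gamma$-convergence and compactness in (b), and \eqref{eq:firstvars}, \eqref{banale} together with \cite[Proposition 3.2]{CDKNP} to identify the subdifferentials with the stated operators. Your write-up merely spells out in more detail the quadratic-form convexity argument and the identification of the limiting flow, which the paper leaves implicit.
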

\begin{proof}
By \eqref{Jshat_re} and \eqref{estJs1}, the functionals 
$\widehat\J^{\Aa_n}$ are $\lambda$-convex and $\lambda$-positive for every $\lambda>2|\Omega|$\,. Moreover, by Remark \ref{DCT}, they are lower semicontinuous with respect to the strong convergence in $L^2(\Omega)$\,. Now, by \eqref{eq:firstvars} and \eqref{banale}, using the very definition of $\widehat\J^{\Aa_n}$ in \eqref{defJshat}, we have that, for every $u,\ffi\in L^2(\Omega)$ 
\begin{equation*}
\lim_{t\to 0}\frac{\widehat\J^{\Aa_n}(u+t\ffi)-\widehat\J^{\Aa_n}(u)}{t}=\Big\langle-(-\Delta)^{-\frac{\Aa_n}{2}}u+2\frac{d\omega_d}{\Aa_n}u,\ffi\Big\rangle_{L^2(\Omega)}\,.
\end{equation*} 
Hence, the stability claim follows by applying Theorem \ref{genstab} with $\h=L^2(\Omega)$\,, $\F^n=\widehat\J^{\Aa_n}$ and $\F^{\infty}=\widehat\J^{0}$\,, once noticed that, in view of Theorem \ref{thm:stozerofo}, assumption (b) is satisfied.
\end{proof}
\subsection{Convergence of the $\Aa$-parabolic flows as $\Aa\to d^-$}
Here we state and prove the  convergence results for the parabolic flows corresponding to the (either scaled or renormalized) $\Aa$-Riesz functionals as $\Aa\to d^-$\,. Such results follow by Theorem \ref{genstab}.
 
\begin{theorem}\label{convheat0ordsto-d2}
Let $\{\Aa_n\}_{n\in\N}\subset (0,d)$ be such that $\Aa_n\to d^-$ as $n\to +\infty$\,. Let $u^{\infty}_0\in L^2(\Omega)$ and $\{u_0^n\}_{n\in\N}\subset L^2(\Omega)$ be such that and 
 $u^n_0\to u^{\infty}_0$ in $L^2(\Omega)$\,. Let $T>0$\,;
then,
for every $n\in\N$ there exists a unique  solution $u^n\in H^1([0,T]; L^2(\Omega))$ of
\begin{equation}\label{cauchyord0n-d2}
\begin{dcases}
 u_t(t) =\pm(-\Delta)^{-\frac{\Aa_n}{2}}u(t) \qquad\textrm{for a.e.\ }t\in(0,T)\\
u(0)=u^{n}_0\,.
\end{dcases}
\end{equation}
Moreover, $u^n\to u^{\infty}$ in $H^1([0,T];L^2(\Omega))$ as $n\to +\infty$\,, where $u^{\infty}$ is the unique 
solution of
\begin{equation}\label{cauchyord0infty-d2}
\begin{dcases}
 u_t(t) =\pm2\int_{\Omega}u(t)\ud y\qquad\textrm{for a.e.\ }t\in (0,T)\,,\\
u(0)=u^{\infty}_0\,,
\end{dcases}
\end{equation}
and
$$
\|u^n(t)-u^\infty(t)\|_{L^2(\Omega)}\to 0\quad\textrm{and}\quad \J^{\Aa_n}(u^n(t))\to \J^{d}(u^\infty(t))\qquad\textrm{for every }t\in[0,T]\,.
$$
\end{theorem}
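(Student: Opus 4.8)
The plan is to deduce the statement from the abstract stability theorem \ref{genstab}, exactly along the lines of the proof of Theorem \ref{convheat0ordsto0}, applied with $\h=L^2(\Omega)$ and with the functionals $\F^n:=\pm\J^{\Aa_n}$ and $\F^\infty:=\pm\J^{d}$, where the sign is the one appearing in \eqref{cauchyord0n-d2}. The preliminary observation is that these Cauchy problems are the $L^2$-gradient flows of $\F^n$ and $\F^\infty$: by \eqref{eq:firstvars} the Fr\'echet differential of $\J^{\Aa_n}$ at $u$ equals $-(-\Delta)^{-\Aa_n/2}u$, so that $-\partial(\pm\J^{\Aa_n})(u)=\pm(-\Delta)^{-\Aa_n/2}u$; and a direct computation of the first variation of $\J^{d}(u)=-(\int_\Omega u)^2$ gives $\partial\J^{d}(u)=-2\int_\Omega u$ (the constant function with that value), whence $-\partial(\pm\J^{d})(u)=\pm 2\int_\Omega u$, which is the right-hand side of \eqref{cauchyord0infty-d2}. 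Existence and uniqueness of the solutions of \eqref{cauchyord0n-d2} and \eqref{cauchyord0infty-d2} in $H^1([0,T];L^2(\Omega))$ then follow from the classical gradient flow theory for $\lambda$-convex functionals (see \cite{AGS, NSV} or \cite[Theorem 3.2]{CDKNP}), once the uniform $\lambda$-convexity below is in place (and noting that $\pm\J^{d}$, being the $\Gamma$-limit of $\lambda$-convex functionals, is itself $\lambda$-convex).

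The next step is to check the structural hypotheses of Theorem \ref{genstab} for the $\F^n$. Finiteness and properness on $L^2(\Omega)$ are given by \eqref{welldefJs}, and strong $L^2$-continuity --- in particular lower semicontinuity --- by \eqref{continJs}. For the uniform $\lambda$-convexity and $\lambda$-positivity I would invoke Remark \ref{quadrpos}: fixing $R>\mathrm{diam}(\Omega)$, one has $\sup_n\|k^{\Aa_n}_R\|_{L^1(\R^d)}=\sup_n\frac{d\omega_d}{\Aa_n}R^{\Aa_n}<+\infty$ because $\Aa_n\to d^-$, so that $\pm\J^{\Aa_n}(u)+\frac{\lambda}{2}\|u\|^2_{L^2(\Omega)}\ge 0$ for every $n$ and every $u\in L^2(\Omega)$, as soon as $\lambda>2\sup_n\frac{d\omega_d}{\Aa_n}R^{\Aa_n}$; since $\pm\J^{\Aa_n}$ is a quadratic form, this bound furnishes simultaneously the $\lambda$-positivity and the $\lambda$-convexity, uniformly in $n$. (For the ``$-$'' sign one even has $0$-positivity directly from \eqref{posdef2}.) This is precisely where the present limit is softer than $\Aa\to 0^+$: no renormalization of the functionals is needed, since the kernels stay equi-integrable.

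It remains to verify assumption (a) of Theorem \ref{genstab}. Theorem \ref{thm:sto-d2} gives $\J^{\Aa_n}(u^n)\to\J^{d}(u)$ whenever $u^n\weakly u$ in $L^2(\Omega)$; this is at once the $\Gamma$-convergence of $\pm\J^{\Aa_n}$ to $\pm\J^{d}$ with respect to the weak $L^2$ topology and, being a genuine limit rather than a mere liminf/limsup pair, it also yields strong recovery sequences --- for any $y\in L^2(\Omega)$ one simply takes the constant sequence $y^n\equiv y$, along which $\pm\J^{\Aa_n}(y)\to\pm\J^{d}(y)$. Finally, the data are well prepared: $\sup_n\F^n(u_0^n)<+\infty$ by \eqref{welldefJs} and the boundedness of $\|u_0^n\|_{L^2(\Omega)}$, and applying Theorem \ref{thm:sto-d2} to the (strongly, hence weakly) convergent sequence $u_0^n$ gives $\F^n(u_0^n)\to\F^\infty(u_0^\infty)$, i.e.\ \eqref{reco0}. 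Theorem \ref{genstab} then delivers the strong $H^1([0,T];L^2(\Omega))$-convergence $u^n\to u^\infty$ together with $\|u^n(t)-u^\infty(t)\|_{L^2(\Omega)}\to 0$ and $\J^{\Aa_n}(u^n(t))\to\J^{d}(u^\infty(t))$ for every $t\in[0,T]$, which is the full assertion.

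The only real care needed is bookkeeping: keeping the $\pm$ signs consistent across the Cauchy problems, the energies $\pm\J^{\Aa_n}$ and $\pm\J^{d}$, and their first variations, and observing that the uniform $L^1$-bound on the kernels --- which breaks down as $\Aa\to 0^+$, forcing the renormalizations of the previous sections --- is exactly what makes uniform $\lambda$-convexity available here directly. No genuine obstacle is expected beyond this.
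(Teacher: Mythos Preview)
Your proposal is correct and follows essentially the same route as the paper: both apply Theorem \ref{genstab} with $\F^n=\pm\J^{\Aa_n}$ and $\F^\infty=\pm\J^{d}$, verify uniform $\lambda$-convexity and $\lambda$-positivity via Remark \ref{quadrpos}, strong $L^2$-continuity via \eqref{continJs}, and assumption (a) together with \eqref{reco0} via Theorem \ref{thm:sto-d2}. Your explicit choice of $\lambda>2\sup_n\frac{d\omega_d}{\Aa_n}R^{\Aa_n}$ is in fact slightly more careful than the paper's $\lambda>2\omega_d(\mathrm{diam}(\Omega))^d$, but the content is the same.
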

\begin{proof}
By \eqref{welldefJs} the functionals $\pm\J^{\Aa_n}$ are finite on $L^2(\Omega)$ and, by Remark \ref{quadrpos}, they are also $\lambda$-positive and $\lambda$-convex for every $\lambda>2\omega_d\big(\mathrm{diam}(\Omega)\big)^d$\,.
Furthermore, by \eqref{continJs}, the functionals $\J^{\Aa_n}$ are continuous with respect to the strong convergence in $L^2(\Omega)$\,. 
 Moreover, for every $u,\ffi\in L^2(\Omega)$\,,
 \begin{equation}\label{banale-d2}
 \lim_{t\to 0}\frac{\J^{d}(u+t\ffi)-\J^{d}(u)}{t}=-\big\langle2\int_{\Omega}u\ud y,\ffi\big\rangle_{L^2(\Omega)}\,.
 \end{equation}
Finally, by Theorem \ref{thm:sto-d2}, we have that $\lim_{n\to +\infty}\J^{\alpha}(u^n_0)=\J^{\infty}(u^\infty_0)$\,.
Hence, the conclusion follow by applying Theorem \ref{genstab} with $\F^n =\pm\J^{\Aa_n}$ and $\F^{\infty}=\pm\J^{d}$, once noticed that, in view of Theorem \ref{thm:sto-d2}(a), assumption (a)  is satisfied. 
\end{proof}
\begin{theorem}\label{convheat1ordsto-d2}
Let $\{\Aa_n\}_{n\in\N}\subset (0,d)$ be such that $\Aa_n\to d^-$ as $n\to +\infty$\,. Let $u^\infty_0\in L^2(\Omega)$ and let $\{u^{n}_0\}_{n\in\N}\subset L^2(\Omega)$ be such that 
 $u^n_0\to u^\infty_0$ in $L^2(\Omega)$\,. Let $T>0$\,; then, 
for every $n\in\N$ there exists a unique  solution $u^n\in H^1([0,T]; L^2(\Omega))$ to
\begin{equation}\label{cauchyord1n-d2}
\begin{dcases}
 u_t(t) =\pm\frac{1}{d-\Aa_n}\Big[(-\Delta)^{-\frac{\Aa_n}{2}}u(t)-2\int_{\Omega}u(t)\ud y \Big] \qquad\textrm{for a.e.\ }t\in (0,T)\,,\\
u(0)=u^{n}_0\,.
\end{dcases}
\end{equation}
Moreover,  $u^n\to u^\infty$ in $H^1([0,T];L^2(\Omega))$ as $n\to +\infty$\,, where $u^\infty\in H^1([0,T];L^2(\Omega))$ is the unique solution to
\begin{equation}\label{cauchyord1infty-d2}
\begin{dcases}
 u_t(t) =\pm(-\Delta)^{-\frac d 2} u(t)\qquad\textrm{for a.e.\ }t\in (0,T)\\
u(0)=u^\infty_0\,,
\end{dcases}
\end{equation}
and
$$
\|u^n(t)-u^\infty(t)\|_{L^2(\Omega)}\to 0\quad\textrm{and}\quad \widetilde{\J}^{\Aa_n}(u^n(t))\to \widetilde{\J}^{d}(u^\infty(t))\qquad\textrm{for every }t\in[0,T]\,.
$$
\end{theorem}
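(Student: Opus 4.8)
The strategy is to apply the abstract stability result, Theorem~\ref{genstab}, with $\h=L^2(\Omega)$, $\F^n=\pm\widetilde{\J}^{\Aa_n}$ and $\F^{\infty}=\pm\widetilde{\J}^{d}$, following verbatim the scheme of the proofs of Theorems~\ref{convheat0ordsto0}, \ref{convheat1ordsto0} and \ref{convheat0ordsto-d2}. First I would check the structural hypotheses on the functionals $\F^n$. Writing $\widetilde{\J}^{\Aa_n}(v)=-\langle\tilde v,\tilde v\ast\tilde k^{\Aa_n}\rangle_{L^2(\R^d)}$ as in \eqref{riscr4}, with $\tilde k^{\Aa_n}=\frac{k^{\Aa_n}-1}{d-\Aa_n}$, and recalling that the Taylor estimate \eqref{tay} together with \eqref{convespe} yields $\sup_{n}\|\tilde k^{\Aa_n}\|_{L^1(B_R(0))}<+\infty$ for every $R>\mathrm{diam}(\Omega)$, Remark~\ref{quadrpos} (see also Remarks~\ref{orasi} and \ref{comptilded}) shows that $\pm\widetilde{\J}^{\Aa_n}$ are finite, quadratic, and $\lambda$-positive -- hence $\lambda$-convex -- for some $\lambda>0$ independent of $n$; the same bound, exactly as in \eqref{continJs}, gives their continuity, and in particular their lower semicontinuity, with respect to the strong $L^2(\Omega)$ convergence.

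Next I would identify the first variations in order to recognise \eqref{cauchyord1n-d2} and \eqref{cauchyord1infty-d2} as the corresponding gradient flows. Combining \eqref{eq:firstvars}, \eqref{banale-d2} and the definition $\widetilde{\J}^{\Aa_n}=\frac{\J^{\Aa_n}-\J^{d}}{d-\Aa_n}$, one obtains, for all $u,\ffi\in L^2(\Omega)$,
\begin{equation*}
\lim_{t\to0}\frac{\widetilde{\J}^{\Aa_n}(u+t\ffi)-\widetilde{\J}^{\Aa_n}(u)}{t}=-\frac{1}{d-\Aa_n}\Big\langle(-\Delta)^{-\frac{\Aa_n}{2}}u-2\!\int_{\Omega}u\ud y,\ffi\Big\rangle_{L^2(\Omega)},
\end{equation*}
so that the subdifferential $\partial(\pm\widetilde{\J}^{\Aa_n})$ is the single-valued operator appearing on the right-hand side of \eqref{cauchyord1n-d2}; analogously, since $\pm(-\Delta)^{-\frac d2}$ is the first variation of $\pm\widetilde{\J}^{d}$ (as recalled earlier in this section), the gradient flow of $\pm\widetilde{\J}^{d}$ is \eqref{cauchyord1infty-d2}. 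Existence and uniqueness of the solutions in $H^1([0,T];L^2(\Omega))$ then follow from the classical theory of gradient flows of $\lambda$-convex, strongly lower semicontinuous functionals on a Hilbert space (see \cite{AGS, NSV} or \cite[Theorem~3.2]{CDKNP}).

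It then remains to verify assumption (a) of Theorem~\ref{genstab} and the hypotheses on the initial data. By Theorem~\ref{thm:sto-d2fo} (see \eqref{nuovatesifo}), $\widetilde{\J}^{\Aa_n}(u^n)\to\widetilde{\J}^{d}(u)$ whenever $u^n\weakly u$ in $L^2(\Omega)$; this gives at once the $\Gamma$-convergence of $\pm\widetilde{\J}^{\Aa_n}$ to $\pm\widetilde{\J}^{d}$ with respect to the weak $L^2$ topology and -- choosing the constant recovery sequence $u^n\equiv u$ -- the $\Gamma$-limsup inequality with respect to the strong $L^2$ topology, so that (a) holds for both signs. Since $u^n_0\to u^\infty_0$ in $L^2(\Omega)$, the norms $\|u^n_0\|_{L^2(\Omega)}$ are bounded, whence by Remark~\ref{quadrpos} applied to $\tilde k^{\Aa_n}$ we get $\sup_n|\widetilde{\J}^{\Aa_n}(u^n_0)|<+\infty$; moreover, applying \eqref{nuovatesifo} to the (strongly, hence weakly) convergent sequence $u^n_0$ gives $\widetilde{\J}^{\Aa_n}(u^n_0)\to\widetilde{\J}^{d}(u^\infty_0)$, i.e.\ condition \eqref{reco0} is automatically satisfied. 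Theorem~\ref{genstab} then yields the strong convergence $u^n\to u^\infty$ in $H^1([0,T];L^2(\Omega))$ together with \eqref{alwaysrs}, that is, $\|u^n(t)-u^\infty(t)\|_{L^2(\Omega)}\to0$ and $\widetilde{\J}^{\Aa_n}(u^n(t))\to\widetilde{\J}^{d}(u^\infty(t))$ for every $t\in[0,T]$, which is the full statement.

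The only genuinely non-formal point is the uniform-in-$n$ $\lambda$-convexity and $\lambda$-positivity of $\pm\widetilde{\J}^{\Aa_n}$, which -- in contrast with the subtler $\Aa\to0^+$ regime discussed in Remark~\ref{anchequestovale} -- reduces to the uniform $L^1_{\mathrm{loc}}$ bound on the rescaled kernels $\tilde k^{\Aa_n}$; this is exactly what \eqref{tay}--\eqref{convespe} provide, and everything else is a routine transcription of the arguments already used for Theorems~\ref{convheat0ordsto-d2} and \ref{convheat1ordsto0}.
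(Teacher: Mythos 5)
Your proposal is correct and follows essentially the same route as the paper: verifying uniform $\lambda$-convexity/$\lambda$-positivity of $\pm\widetilde{\J}^{\Aa_n}$ via the uniform $L^1_{\mathrm{loc}}$ bound on the kernels $\tilde k^{\Aa_n}$ coming from \eqref{tay}--\eqref{convespe} and Remark~\ref{quadrpos}, identifying the first variations, and then invoking Theorem~\ref{genstab} under assumption (a), whose hypotheses (including \eqref{reco0}) are supplied by Theorem~\ref{thm:sto-d2fo}. Your explicit checks of $\sup_n\widetilde{\J}^{\Aa_n}(u^n_0)<+\infty$ and of the strong-topology $\Gamma$-limsup via constant recovery sequences are details the paper leaves implicit, but the argument is the same.
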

\begin{proof}
By \eqref{ovvia}, \eqref{convespe}, and \eqref{lausodopo}, using Remark \ref{quadrpos}, we have that
the functionals $\pm\widetilde\J^d$ and
 $\pm\widetilde\J^{\Aa_n}$ are $\lambda$-convex and $\lambda$-positive for every $\lambda>4\|\tilde k^{d}\|_{L^1(B_{\mathrm{diam}(\Omega)}(0))}$\,. Moreover, by \eqref{continJs}, the functionals  $\widetilde\J^{\Aa_n}$  are continuous with respect to the strong convergence in $L^2(\Omega)$\,, and, by arguing as in \eqref{continJs} one can show that also the functional  $\widetilde\J^d$ is continuous with respect to the strong convergence in $L^2(\Omega)$\,. Now, by \eqref{eq:firstvars} and \eqref{banale-d2}, we have that, for every $u,\ffi\in L^2(\Omega)$ 
\begin{equation*}
\lim_{t\to 0}\frac{\widetilde\J^{\Aa_n}(u+t\ffi)-\widetilde\J^{\Aa_n}(u)}{t}=\frac{1}{d-\Aa_n}\Big\langle-(-\Delta)^{-\frac{\Aa_n}{2}} u+2\int_{\Omega}u(t)\ud y,\ffi\Big\rangle_{L^2(\Omega)}\,.
\end{equation*} 
Finally, by Theorem  \ref{thm:sto-d2fo}, $\lim_{n\to +\infty}\widetilde\J^{\Aa_n}(u^n_0)=\widetilde\J^{d}(u^{\infty}_0)$\,.
Hence, the conclusion follows by applying Theorem \ref{genstab} with $\F^n=\pm\widetilde\J^{\Aa_n}$ and $\F^{\infty}=\pm\widetilde\J^{d}$\,, once noticed that, in view of Theorem \ref{thm:sto-d2fo}, assumption (a) is satisfied.
\end{proof}

\end{document}